\documentclass[11pt, a4paper, twoside, leqno]{article}
\usepackage[T1]{fontenc}
\usepackage{lmodern}
\usepackage{amsmath,amssymb,amsthm}
\usepackage{mathtools}
\usepackage[margin=1.05in]{geometry}
\usepackage{microtype}
\usepackage{booktabs}
\usepackage{enumitem}
\usepackage[hidelinks]{hyperref}

\hypersetup{
  pdftitle={Endpoint maximal estimates for the half-wave propagator},
  pdfsubject={A research manuscript on critical Sobolev wave maximal estimates},
  pdfkeywords={wave equation, maximal estimate, Sobolev endpoint, radial frequency decomposition}
}
\numberwithin{equation}{section}
\newtheorem{theorem}{Theorem}[section]
\newtheorem{proposition}[theorem]{Proposition}
\newtheorem{lemma}[theorem]{Lemma}
\newtheorem{corollary}[theorem]{Corollary}
\theoremstyle{remark}
\newtheorem{remark}[theorem]{Remark}
\newcommand{\R}{\mathbb R}

\newcommand{\U}{U}
\newcommand{\M}{\mathcal M}
\newcommand{\A}{\mathbb A}
\newcommand{\norm}[1]{\lVert #1\rVert}

\newcommand{\dd}{\,\mathrm d}

\DeclareMathOperator{\supp}{supp}

\allowdisplaybreaks[1]
\let\oldthanks\thanks
\renewcommand{\thanks}[1]{%
	\renewcommand{\thefootnote}{}%  清除当前脚注标记
	\oldthanks{#1}%                 调用原始的 \thanks
	\renewcommand{\thefootnote}{\arabic{footnote}}% 恢复为数字编号（或您需要的样式）
}

\begin{document}

\title {\textbf{ Endpoint maximal estimates for the half-wave propagator}	
	\author{Bowen An, \hspace{0.2cm}Wenjuan Li, \hspace{0.2cm}Huiju Wang}
	\thanks{2020 {\it Mathematical Subject Classification}: 35L05, 42B37}\hspace{0.2cm}
	\thanks{{\it Keywords and phrases:}  Maximal operator, Half-wave operator, critical Sobolev regularity, Strichartz estimate}
	\thanks{This work is supported by the National Key R\&D Program of China (No.2023YFA1010800); Natural Science Foundation of China (No.12301113; No.12271435).}}

\date{}
\maketitle
\thispagestyle{empty}

\begin{abstract}
We present a proof of the strong maximal estimate for the Euclidean
half-wave propagator at the critical pair
\(s_d=d/(d+1)\) and \(q_d=2(d+1)/(d-1)\), in every dimension \(d\ge2\), i.e.
\begin{equation*}
\biggl\|\sup_{0<t<1}|e^{it\sqrt{-\Delta}}f(x)|\biggl\|_{L^{q_d}(\R^d)}
	\le C_d\norm{f}_{ H^{s_d}(\R^d)}.
\end{equation*}
\end{abstract}

\section{Introduction}
\subsection{Background and main results}
For \(d\ge2\), let
\begin{equation}\label{eq:propagator}
 \U(t)f(x)=e^{it\sqrt{-\Delta}}f(x)
   =(2\pi)^{-d}\int_{\R^d}
      e^{i(x\cdot\xi+t|\xi|)}\widehat f(\xi)\dd\xi.
\end{equation}
The associated maximal operators are
\begin{equation}\label{eq:maximal-def}
 \M_{\R}f(x)=\sup_{t\in\R}|\U(t)f(x)|,
 \qquad
 \M_{(0,1)}f(x)=\sup_{0<t<1}|\U(t)f(x)|.
\end{equation}
These expressions are initially defined for Schwartz functions.

The local-in-time Sobolev maximal inequality is
\begin{equation}\label{eq:local-general}
 \norm{\M_{(0,1)}f}_{L^q(\R^d)}
       \le C\norm{f}_{H^\sigma(\R^d)}.
\end{equation}
Rogers and Villarroya~\cite{RV} identified the sharp regularity
threshold up to equality. For \(2\le q<\infty\), the threshold is
\begin{equation}\label{eq:threshold}
 \sigma_c(q,d)=
 \max\left\{
     \frac d2-\frac dq,\,
     \frac{d+1}{4}-\frac{d-1}{2q}
     \right\}.
\end{equation}
In particular, their results give sufficiency above this threshold
and failure below it.
Cho, Lee, and Li~\cite{CLL} proved the strong estimates at
\(\sigma=\sigma_c(q,d)\) for
\(2<q<\infty\), with the critical exponent
\begin{equation}\label{eq:critical-pair}
 q_d=\frac{2(d+1)}{d-1},
 \qquad
 s_d=\sigma_c(q_d,d)=\frac d{d+1}
\end{equation}
excluded. The case \(q=q_d\) is explicitly left open after
Theorem~1.1 of~\cite{CLL}.
The argument below addresses this remaining pair.

\begin{theorem}\label{thm:main}
Let \(d\ge2\), and let \(q_d,s_d\) be as in
\eqref{eq:critical-pair}. There is a constant \(C_d\) such that
\begin{equation}\label{eq:main}
 \norm{\M_{\R}f}_{L^{q_d}(\R^d)}
       \le C_d\norm{f}_{\dot H^{s_d}(\R^d)}
\end{equation}
for every \(f\in\mathcal S(\R^d)\).
The estimate extends to the homogeneous Sobolev space
\(\dot H^{s_d}(\R^d)\).
\end{theorem}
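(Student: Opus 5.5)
The starting point is the observation that the critical pair is exactly the Tomas--Stein/Strichartz pair for the half-wave group. For \(q=q_d\) one has the space--time estimate
\[
\norm{\U(t)f}_{L^{q_d}(\R\times\R^d)}\lesssim \norm{f}_{\dot H^{1/2}(\R^d)},
\]
since \(\frac d2-\frac{d+1}{q_d}=\frac12\). Adding the \(1/q_d\) derivative needed to control a supremum in \(t\) gives the regularity
\[
\frac12+\frac{1}{q_d}=\frac12+\frac{d-1}{2(d+1)}=\frac{d}{d+1}=s_d.
\]
So for a single Littlewood--Paley piece \(f_k=P_kf\), with \(\widehat{f_k}\) supported in \(|\xi|\sim2^k\), the bound
\[
\norm{\M_{\R}f_k}_{L^{q_d}}\lesssim 2^{ks_d}\norm{f_k}_{L^2}
\]
follows. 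To see this, let \(F_k(t,x)=\U(t)f_k(x)\), which is band-limited in \(t\) at scale \(2^k\). Apply the one-dimensional Bernstein/Sobolev inequality \(\sup_t|F_k|\lesssim 2^{k/q_d}\norm{F_k(\cdot,x)}_{L^{q_d}_t}\), then integrate in \(x\) and use Strichartz. By scaling (\(\M_{\R}\) commutes with dilations, and the statement is homogeneous), nothing else about \(f_k\) matters. The whole difficulty is therefore the summation over \(k\): the trivial triangle inequality loses the \(\ell^1\) versus \(\ell^2\) gap, which is precisely why \(q=q_d\) is excluded in~\cite{CLL}.

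To sum, I would replace the Bernstein step by the fundamental theorem of calculus applied to \(|F|^{q}\), with \(q=q_d\) and \(F=\sum_k F_k\). Since \(F(t,x)\to0\) as \(|t|\to\infty\) for Schwartz data, for each \(x\)
\[
\sup_t|F(t,x)|^{q}\le q\int_\R |F(t,x)|^{q-1}\,|\partial_tF(t,x)|\dd t .
\]
Integrating in \(x\), one must bound \(\iint |F|^{q-1}|\partial_t F|\) by \(\norm{f}_{\dot H^{s_d}}^{q}\). I would expand \(\partial_tF=\sum_k\partial_tF_k\), with \(\partial_tF_k\sim 2^{k}F_k\), and pair each \(k\) against \(|F|^{q-1}\). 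The pairing needs to be reorganised, because the naive Hölder bound is only
\[
\norm{F}_{L^q}^{q-1}\,\Bigl\|\sum_k 2^kF_k\Bigr\|_{L^q},
\]
which costs a full derivative rather than \(1/q\) of one.

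The key step is a frequency-adapted splitting. I would write \(|F|^{q-1}\) (or rather \(|F|^{q-2}\bar F\)) against \(F_k\) using the decomposition
\[
F=F_{<k}+F_{\ge k},\qquad F_{<k}=\sum_{j<k}F_j,\quad F_{\ge k}=\sum_{j\ge k}F_j .
\]
For the high part \(F_{\ge k}\), the factor \(2^k\) is redistributed as \(2^{k/q}\cdot 2^{k(1-1/q)}\). The factor \(2^{k(1-1/q)}\) is dominated by the frequencies \(\ge 2^k\) present in \(q-1\) copies of \(F_{\ge k}\), giving a geometric gain in \(j-k\) against \(2^{j/q}\)-weighted Strichartz norms. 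For the low part \(F_{<k}\), I would use that \(F_{<k}\) is essentially constant in \(t\) on scale \(2^{-k}\). This allows it to be estimated by a local maximal function in time of \(F_{<k}\), which is itself controlled by the single-scale bound from the first step. Combining the two parts should reduce matters to a discrete Hardy-type / Schur inequality of the form
\[
\sum_{j,k} 2^{-\epsilon|j-k|}\, a_j^{q-1}a_k\lesssim \Bigl(\sum_k a_k^2\Bigr)^{q/2},
\qquad a_k=2^{ks_d}\norm{f_k}_{L^2}.
\]
Where genuine square functions \(\bigl(\sum_k|2^{k/q}F_k|^2\bigr)^{1/2}\) appear in \(L^{q}_{t,x}\), I would control them by the Littlewood--Paley square function estimate in \(\R^{d+1}\): the \(F_k\) have space--time Fourier support on disjoint dyadic pieces of the cone. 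This estimate, together with \(\ell^2\)-valued Strichartz, gives \(\ell^2\) rather than \(\ell^1\) summation. The extension to \(\dot H^{s_d}\) then follows by density and Fatou.

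I expect the main obstacle to be the low--high interaction, namely the terms where \(|F|^{q-2}\bar F\) is dominated by frequencies far below \(2^k\) while \(\partial_tF_k\) carries the full factor \(2^k\). The trade of time-smoothness for a maximal function must be made without losing a logarithm, since any \(\log\) loss is exactly the endpoint failure. If the direct splitting leaks a log, my first fallback would be to prove a restricted weak-type bound \(L^{2,1}\)-type in the \(\ell^2\) index via Bourgain's interpolation trick between \(q\) slightly above and below \(q_d\). I would then upgrade it to strong type using the orthogonality provided by the radial (dyadic-in-\(|\xi|\)) decomposition and the \(\ell^2\)-Strichartz estimate.
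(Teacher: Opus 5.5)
\textbf{Main gap.} Your reduction rests on bounding \(\iint|F|^{q-1}|\partial_tF|\,\dd x\,\dd t\) (with \(q=q_d\)) by \(\norm{f}_{\dot H^{s_d}}^{q}\). That inequality is false by a power, not by a logarithm, so no reorganisation of the low--high terms can save it.

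Fix \(k\gg j\). Let \(\widehat{f_j}\) be a multiple of \(\mathbf 1_{\{2^j\le|\xi|\le2^{j+1}\}}\), and let \(\widehat{f_k}\) be a multiple of the indicator of the thin shell \(\{2^k\le|\xi|\le2^k+2^j\}\subset\A_{2^k}\). Normalise so that \(2^{js_d}\norm{f_j}_{L^2}=2^{ks_d}\norm{f_k}_{L^2}=1\). For \(t>0\) both waves concentrate on the same conic shell \(\{\bigl||x|-t\bigr|\lesssim2^{-j},\ |x|\gtrsim2^{-j}\}\). There, the angular stationary-phase asymptotics give
\[
|F_j|\approx2^{-j/q}|x|^{-\frac{d-1}{2}},\qquad
|F_k|\approx2^{-ks_d+j/2}|x|^{-\frac{d-1}{2}},\qquad
|\partial_tF_k|\approx2^k|F_k| .
\]
Hence for \(F=F_j+F_k\) one has \(|F|\approx|F_j|\) and \(|\partial_tF|\approx2^k|F_k|\) on a fixed proportion of that shell. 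Since \((d-1)q/2=d+1\),
\[
\iint|F|^{q-1}|\partial_tF|\,\dd x\,\dd t
\gtrsim2^{k}\,2^{-j\frac{q-1}{q}}\,2^{-ks_d+\frac j2}\,2^{-j}\int_{2^{-j}}^{\infty}R^{-2}\,\dd R
\approx2^{(k-j)(1-s_d)} .
\]
Meanwhile \(\norm{f}_{\dot H^{s_d}}\sim1\), and \(\norm{\M_\R f}_{L^{q}}\lesssim1\) by your single-scale bound applied to each piece.

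The mechanism is simple. A small high-frequency wave riding on a large low-frequency one makes \(|F|^q\) oscillate in \(t\) at frequency \(2^k\). Its total variation therefore exceeds \(\sup_t|F|^q\) by \(2^{(k-j)/(d+1)}\). Once you put absolute values on \(\partial_tF\), the quantity you must estimate is itself this large. The fact that \(F_{<k}\) is nearly constant on time scale \(2^{-k}\) does not help, and neither does a local maximal function of \(F_{<k}\). Suppose instead you kept the signed integral \(\mathrm{Re}\int_{-\infty}^{t_0}|F|^{q-2}\bar F\,\partial_tF\,\dd t\) and integrated by parts in the low--high terms. You would then need bounds for \(\sup_t|F_{<k}|^{q-1}|F_k|\) with decay in \(2^{j-k}\), which is a multilinear maximal estimate with frequency decay that you do not supply. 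Your single-scale bound and your high--high bookkeeping are fine.

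\textbf{What the paper does instead.} That missing decay estimate is the heart of the paper's proof. Since \(q_d=2p_d\) with \(p_d\ge1\), the paper writes
\[
\norm{\sup_t|\textstyle\sum_ju_j|}_{L^{q_d}}^2=\norm{\sup_t|(\textstyle\sum_ju_j)^2|}_{L^{p_d}}\le\textstyle\sum_{j,k}\norm{\sup_t|u_ju_k|}_{L^{p_d}} .
\]
It then proves \(\norm{\sup_t|\U(t)f_N\,\U(t)g_M|}_{L^{p_d}}\lesssim(M/N)^{\delta_d}N^{s_d}M^{s_d}\norm{f_N}_{L^2}\norm{g_M}_{L^2}\), so Schur's test sums the kernel \(2^{-\delta_d|j-k|}\) in \(\ell^2\).

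The decay comes from H\"older with two non-endpoint Strichartz pairs of regularities \(\alpha+\beta=1\), with the smaller regularity on the high-frequency factor. Temporal Bernstein is then applied to the product. For \(d\ge3\), the high annulus is first sliced radially into \(\lesssim N/M\) shells of width \(M\). Each sliced product then has temporal Fourier support of length \(O(M)\), so Bernstein costs \(M^{1/p_d}\) instead of \(N^{1/p_d}\). The price is a factor \((N/M)^{1/2}\) from Cauchy--Schwarz.

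\textbf{The fallback.} Your fallback does not fill the gap either. Bourgain's trick gives at best a weak-type (Lorentz) endpoint bound. The proposed upgrade to strong type by orthogonality is exactly the hard step, and it is left unspecified. Moreover, for \(\M_\R\) there is no estimate at any \(q<q_d\) to interpolate with. Scaling forces \(\sigma=d/2-d/q\), which lies below the Knapp threshold \(\frac{d+1}{4}-\frac{d-1}{2q}\) when \(q<q_d\).
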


Since the inhomogeneous Sobolev space $H^{s_d}(\R^d)$
is continuously embedded into the homogeneous Sobolev space $\dot H^{s_d}(\R^d)$, then we directly have the following corollary.
\begin{corollary}\label{cor:local}
Let \(d\ge2\), and let \(q_d,s_d\) be as in
\eqref{eq:critical-pair}. There is a constant \(C_d\) such that. The estimate
\begin{equation}\label{eq:local-endpoint}
 \norm{\M_{(0,1)}f}_{L^{q_d}(\R^d)}
       \le C_{d}\norm{f}_{H^{s_d}(\R^d)}
\end{equation}
holds for $f\in H^{s_d}(\R^d)$.
\end{corollary}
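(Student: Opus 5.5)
Corollary~\ref{cor:local} follows from Theorem~\ref{thm:main} in two short steps: a pointwise comparison of maximal functions, and a norm comparison of Sobolev spaces. No new analysis is needed.

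\emph{Pointwise comparison.} For every \(f\in\mathcal S(\R^d)\) and every \(x\), the supremum over \(0<t<1\) is taken over a subset of \(\R\). Hence
\[
\M_{(0,1)}f(x)\le \M_{\R}f(x)
\qquad\text{pointwise.}
\]

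\emph{Sobolev comparison.} Since \(0<s_d<1\), we have \(|\xi|^{2s_d}\le (1+|\xi|^2)^{s_d}\) for all \(\xi\). Plancherel then gives
\[
\norm{f}_{\dot H^{s_d}}\le \norm{f}_{H^{s_d}}.
\]
The homogeneous norm is finite for \(H^{s_d}\) functions because \(s_d<d/2\), so \(|\xi|^{2s_d}\) is locally integrable and causes no trouble near the origin. Combining this with the pointwise comparison and \eqref{eq:main}, we obtain \eqref{eq:local-endpoint} for Schwartz \(f\), with the same constant \(C_d\).

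\emph{Extension to all of \(H^{s_d}\).} This is the only step needing care, namely interpreting \(\M_{(0,1)}f\) for general \(f\in H^{s_d}\).

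First, the family \(f\mapsto U(t)f\), \(t\in(0,1)\), is continuous in \(t\) for Schwartz \(f\). Therefore the supremum over \((0,1)\) equals the supremum over rational \(t\), and \(\M_{(0,1)}\) is sublinear and measurable.

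Next, take \(f_n\in\mathcal S\) with \(f_n\to f\) in \(H^{s_d}\). The estimate applied to \(f_n-f_m\), using sublinearity,
\[
|\M_{(0,1)}f_n-\M_{(0,1)}f_m|\le \M_{(0,1)}(f_n-f_m),
\]
shows that \(\{\M_{(0,1)}f_n\}\) is Cauchy in \(L^{q_d}\).

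For each fixed \(t\), \(U(t)f_n\to U(t)f\) in \(L^2\) by unitarity. Passing to an a.e.\ convergent subsequence gives \(|U(t)f(x)|\le \lim_n \M_{(0,1)}f_n(x)\) for a.e.\ \(x\), simultaneously for all rational \(t\). So \(\M_{(0,1)}f\), defined as the supremum over rational \(t\), satisfies the bound. Fatou's lemma then yields \eqref{eq:local-endpoint}. The same density argument is what gives the stated extension of Theorem~\ref{thm:main} to \(\dot H^{s_d}\).
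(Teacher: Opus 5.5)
Your argument is correct and is essentially the paper's. The paper obtains the corollary in one line from $\M_{(0,1)}f\le\M_{\R}f$, the embedding $H^{s_d}\hookrightarrow\dot H^{s_d}$, and the $\dot H^{s_d}$-extension of Theorem~\ref{thm:main} (via the bounded linear extension theorem in $L^{q_d}(\R^d;C([-T,T]))$); you redo that density step by hand with a supremum over rational times.

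Two small remarks. First, your Cauchy bound for $\M_{(0,1)}(f_n-f_m)$ already shows that $\U(\cdot)f_n$ converges in $L^{q_d}(\R^d;C([0,1]))$, so the limit has a version continuous in $t$ and your rational-time definition agrees with the natural one. Second, $f\in\dot H^{s_d}$ follows simply from $|\xi|^{2s_d}\le(1+|\xi|^2)^{s_d}$ together with $\widehat f\in L^2\subset L^1_{\mathrm{loc}}$, so the appeal to local integrability of $|\xi|^{2s_d}$ is beside the point.
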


The main ingredient is the following product inequality. Set
\begin{equation}\label{eq:delta}
 p_d=\frac{q_d}{2}=\frac{d+1}{d-1},
 \qquad
 \delta_d=
 \begin{cases}
   1/30,& d=2,\\
   1/(d^2-1),& d\ge3.
 \end{cases}
\end{equation}
For \(N>0\), set
\[
 \A_N=\{\xi\in\R^d:N/2\le|\xi|\le2N\}.
\]

\begin{proposition}\label{prop:bilinear-intro}
Suppose \(N\ge M>0\),
\(\supp\widehat f_N\subset\A_N\), and
\(\supp\widehat g_M\subset\A_M\), with \(f_N,g_M\in L^2(\R^d)\).
Then
\begin{equation}\label{eq:bilinear-intro}
 \left\|\sup_{t\in\R}
       |\U(t)f_N\,\U(t)g_M|\right\|_{L^{p_d}(\mathbb{R}^d)}
 \le C_d\left(\frac MN\right)^{\delta_d}
       \bigl(N^{s_d}\norm{f_N}_{L^2}\bigr)
       \bigl(M^{s_d}\norm{g_M}_{L^2}\bigr).
\end{equation}
\end{proposition}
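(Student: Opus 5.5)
The plan is to reduce the maximal-in-time bound to a space-time bilinear estimate using a one-dimensional Sobolev (Bernstein) embedding in \(t\), and then to prove that space-time estimate with a gain by bilinear restriction theory for the cone. By scaling \(x\mapsto Nx\), \(t\mapsto Nt\) I normalize \(N=1\) and \(\lambda:=M/N\le1\). I check homogeneity first. The left side of \eqref{eq:bilinear-intro} scales like \(N^{d-d/p_d}\cdot\) (the \(L^2\) norms), and \(d-d/p_d=2d/(d+1)=2s_d\). So the claim is equivalent to
\[
\Bigl\|\sup_t|\U(t)f_1\,\U(t)g_\lambda|\Bigr\|_{L^{p_d}_x}\lesssim \lambda^{s_d+\delta_d}\norm{f_1}_{L^2}\norm{g_\lambda}_{L^2}.
\]

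\textbf{Step 1: removing the supremum.} Put \(h(t,x)=\U(t)f_1(x)\,\U(t)g_\lambda(x)\). Its time-Fourier transform is supported where \(|\xi|+|\eta|\) lies in an interval of size \(O(1)\), and \(\partial_t h=\U(t)(i\sqrt{-\Delta}f_1)\,\U(t)g_\lambda+\U(t)f_1\,\U(t)(i\sqrt{-\Delta}g_\lambda)\) has the same structure with bounded multipliers. For each fixed \(x\) I use the elementary inequality \(\sup_t|h|^{p}\lesssim \int|h|^{p-1}|\partial_th|\dd t\), then integrate in \(x\) and apply Hölder. This gives
\[
\Bigl\|\sup_t|h|\Bigr\|_{L^{p_d}_x}\lesssim \norm{h}_{L^{p_d}_{t,x}}+\norm{\partial_t h}_{L^{p_d}_{t,x}}.
\]
Both terms are bilinear free-wave products with frequency scales \(1\) and \(\lambda\). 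So it suffices to show
\begin{equation*}
\norm{\U(t)f_1\,\U(t)g_\lambda}_{L^{p_d}_{t,x}(\R^{d+1})}\lesssim \lambda^{s_d+\delta_d}\norm{f_1}_{L^2}\norm{g_\lambda}_{L^2}.
\end{equation*}
The \(L^{p_d}_{t,x}\) norm has scaling degree \(d-(d+1)/p_d=1\). This is consistent with the required exponent \(\lambda^{s_d+\delta_d}\) on the low frequency, provided the high frequency carries \(N^{1-s_d-\delta_d}\). Linear Strichartz at \(q_d=2p_d\) with Hölder only gives \(\lambda^{1/2}\), and \(1/2<s_d\), so a genuinely bilinear gain is needed.

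\textbf{Step 2: the bilinear space-time estimate.} I have two endpoints to interpolate between.

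The first is the bilinear \(L^2_{t,x}\) estimate for frequency-separated waves (Klainerman--Machedon / Foschi--Klainerman type):
\[
\norm{\U f_1\,\U g_\lambda}_{L^2_{t,x}}\lesssim \lambda^{(d-1)/2}\norm{f_1}\norm{g_\lambda}.
\]
This is proved by Plancherel in \((t,x)\) and a measure bound on the intersection of the thin annular cone piece with translates of the small cone piece.

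The second is the Tao--Wolff bilinear cone restriction theorem, which holds in \(L^{r}_{t,x}\) for every \(r>(d+3)/(d+1)\). Since \((d+3)/(d+1)<p_d\), the exponent \(p_d\) is strictly inside the admissible range. To apply it I split \(f_1\) into angular sectors of aperture \(\sim\lambda\). On each sector the low-frequency wave is essentially transversal to the high one after a Lorentz/parabolic rescaling that maps the \(\lambda\)-sector to a unit-size piece. Rescaling, applying Tao's estimate with transversality constants \(O(1)\), and undoing the rescaling gives a bound \(\lambda^{\beta(r)}\) per sector. I then recombine the sectors by almost-orthogonality in \(L^{p_d/2}\) or \(L^2\), paying a controlled power of the number of sectors.

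Interpolating (bilinear complex/real interpolation, or Hölder between the \(L^2\) bound and the \(L^{r}\) bound with \(r\) close to \((d+3)/(d+1)\)) should yield an exponent strictly larger than \(s_d\) at \(p_d\). The margin over \(s_d\) is what produces \(\delta_d\). I expect the specific value \(1/(d^2-1)\), with the separate value \(1/30\) for \(d=2\), to come out of this bookkeeping, with \(d=2\) handled by the weaker two-dimensional bilinear range.

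\textbf{Main obstacle.} The hard step is Step 2: extracting a strictly positive power of \(M/N\) beyond \(s_d\) exactly at the endpoint \(p_d\). The sector decomposition and rescaling must lose no more than the available margin, and the \(\epsilon\)-losses in bilinear restriction must be absorbed. If the angular recombination loses too much, I would first try working directly in wave-packet form: Whitney-decompose \(\A_1\times\A_\lambda\) by angle, and apply Tao's estimate with the transversality factor tracked explicitly as a power of the angle.
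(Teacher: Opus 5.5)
Your Step~1 is sound. It is the temporal Bernstein inequality at scale $N$, which is what the paper does for $d=2$. But it commits you to proving $\norm{\U(t)f_N\,\U(t)g_M}_{L^{p_d}_{x,t}}\lesssim N^{\frac1{d+1}-\delta}M^{\frac d{d+1}+\delta}\norm{f_N}_{L^2}\norm{g_M}_{L^2}$, and Step~2 does not deliver this.

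\textbf{Linear Strichartz is not exhausted by the symmetric pair.} You only tried $L^{q_d}_{x,t}\times L^{q_d}_{x,t}$. H\"older with two \emph{different} sharp-admissible pairs, of regularities $\alpha$ on the high factor and $1-\alpha$ on the low factor, gives $N^{\alpha}M^{1-\alpha}$. This suffices whenever some admissible $\alpha<1/(d+1)$ exists, which is the case for $d\le4$. In $d=2$ the paper takes $\alpha=3/10$, giving $\delta_2=1/30$. In $d=3$ your bilinear $L^2$ bound also closes, with $\delta=1/4$, because $p_3=2$.

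\textbf{For $d\ge5$ your interpolation cannot work as designed.} Here $a\ge2$ on the low factor forces $\alpha\ge1-p_d/2>1/(d+1)$, so something beyond Strichartz is needed. But the unseparated bilinear $L^r$ estimates you want to interpolate with do not exist below $p_d$:
\begin{itemize}
\item Take $\widehat f_N,\widehat g_M$ to be normalized indicators of $\{N\le\xi_1\le N+M,\ |\xi'|\le M\theta\}$ and $\{M\le\eta_1\le 3M/2,\ |\eta'|\le M\theta\}$. The product has size $\sim M^d\theta^{d-1}$ on a space-time tube of measure $\sim(M\theta)^{-(d+1)}$. Hence $\norm{\U(t)f_N\,\U(t)g_M}_{L^r_{x,t}}\gtrsim M^{d-\frac{d+1}r}\,\theta^{(d-1)-\frac{d+1}r}$, which is unbounded as $\theta\to0$ whenever $r<p_d$.
\item With $d=2$, $r=2$, the same example shows your quoted $L^2$ bilinear bound is false. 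Also $p_2=3$ lies outside your interpolation range.
\item Tao--Wolff needs angular separation. Your claim that the low wave is transversal to each high sector fails, since $\A_\lambda$ contains every direction.
\item After a Whitney decomposition in the angle $\theta$, the Lorentz rescaling factor at exactly $r=p_d$ is $\theta^0$. Every dyadic angle contributes the same bound, there are infinitely many, and no power of $\lambda$ absorbs that.
\end{itemize}
This is the genuine criticality of $p_d$, not a bookkeeping issue.

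\textbf{The missing idea: do not pay Bernstein at scale $N$.} The paper cuts $[N/2,2N]$ into $K\lesssim N/M$ radial intervals $I_\ell$ of length $M$ and sets $\widehat{f_{N,\ell}}=\mathbf 1_{I_\ell}(|\xi|)\widehat f_N$.
\begin{itemize}
\item Each product $\U(t)f_{N,\ell}\,\U(t)g_M$ has temporal Fourier support in an interval of length $O(M)$. Its center is removed by modulation, so Bernstein costs only $M^{1/p_d}$.
\item Each piece is bounded in $L^{p_d}_{x,t}$ by H\"older and \emph{linear} Strichartz, with regularities $\alpha=\frac12-\frac d{d^2-1}$ and $\beta=\frac12+\frac d{d^2-1}$. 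Both time exponents exceed $2$.
\item Summing over $\ell$ by Cauchy--Schwarz and $L^2$ orthogonality costs $K^{1/2}\lesssim(N/M)^{1/2}$.
\end{itemize}
The net effect replaces the Bernstein factor $N^{1/p_d}$ by $M^{1/p_d}(N/M)^{1/2}$, a gain since $1/p_d>1/2$ for $d>3$. This opens the admissible window $1-\frac{p_d}2<\alpha<\frac1{2p_d}$ for every $d\ge3$. The result is $N^{\alpha+1/2}M^{\beta+1/p_d-1/2}=(M/N)^{1/(d^2-1)}N^{s_d}M^{s_d}$, and no bilinear restriction theory is needed.
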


The frequency ratio in \eqref{eq:bilinear-intro} is essential for the
argument. A single-annulus maximal estimate, followed by an
unweighted sum over annuli, does not supply the Sobolev square sum.
In contrast, the kernel \(2^{-\delta_d|j-k|}\) obtained from
\eqref{eq:bilinear-intro} is summable.

Here is the mechanism in dimensions \(d\ge3\).
Write \(p=p_d\), \(s=s_d\), and choose linear Strichartz regularities
\(\alpha,\beta\) satisfying \(\alpha+\beta=1\).
Divide the higher annulus into \(K\lesssim N/M\) disjoint radial
intervals of width at most \(M\). A product involving one such
interval and the lower annulus has temporal Fourier support in an
interval of length \(O(M)\). Its location may be of order \(N\),
but a scalar modulation removes the center without changing its
absolute value. Temporal Bernstein inequality costs \(M^{1/p}\), while summing
the input \(L^2\) norms costs \(K^{1/2}\). The resulting coefficient is
\begin{equation}\label{eq:mechanism}
 N^{\alpha+1/2}M^{\beta+1/p-1/2}.
\end{equation}
Since \(s=1/2+1/(2p)\), a positive frequency gain follows whenever
\[
 1-\frac p2<\alpha<\frac1{2p}.
\]
This interval is nonempty for every \(d\ge3\), because its length is
\((p-1)^2/(2p)>0\).
All the linear estimates can be chosen with time exponent greater
than two and finite spatial exponent.

\subsection{Organization of the Paper}

Section~\ref{sec:prelim} records the classical estimates and the
temporal support facts used below.
Section~\ref{sec:bilinear} proves
Proposition~\ref{prop:bilinear-intro}.
Section~\ref{sec:main-proof} carries out the frequency summation.

\subsection{AI usage}
We used ChatGPT 6.1 for English-language editing and proofreading, as well as to explore possible approaches to parts of the proofs. The principal ideas underlying the results were suggested by ChatGPT 6.1. The authors developed, carefully verified, and rewrote all the mathematical arguments, simplifying most of the proofs in the process. The authors are responsible for this paper.

\section{Preliminaries}\label{sec:prelim}
\subsection{Notations and Sobolev spaces}
\label{subsec:notation}

We use the Fourier transform convention
\[
\widehat f(\xi)
=\mathcal F_xf(\xi)
=\int_{\mathbb R^d}e^{-ix\cdot\xi}f(x)\,\dd x,
\]
with inverse transform
\[
f(x)
=\mathcal F_x^{-1}\widehat f(x)
=(2\pi)^{-d}\int_{\mathbb R^d}
e^{ix\cdot\xi}\widehat f(\xi)\,\dd\xi.
\]
The same convention is used in the time variable:
\[
\mathcal F_tF(\tau)
=\int_{\mathbb R}e^{-it\tau}F(t)\,\dd t,
\qquad
F(t)
=(2\pi)^{-1}\int_{\mathbb R}
e^{it\tau}\mathcal F_tF(\tau)\,\dd\tau.
\]
We write \(\mathcal S(\mathbb R^n)\) and
\(\mathcal S'(\mathbb R^n)\) for the Schwartz space and the space of
tempered distributions, respectively. Unless stated otherwise, Fourier
support conditions are understood in the sense of tempered
distributions.

For \(s\in\mathbb R\), the inhomogeneous Sobolev space is defined by
\[
H^s(\mathbb R^d)
=\left\{
f\in\mathcal S'(\mathbb R^d):
(1+|\xi|^2)^{s/2}\widehat f(\xi)\in L^2(\mathbb R^d)
\right\},
\]
equipped with the norm
\[
\|f\|_{H^s}^2
=(2\pi)^{-d}\int_{\mathbb R^d}
(1+|\xi|^2)^s|\widehat f(\xi)|^2\,\dd\xi.
\]

For \(0<s<d/2\), we use the canonical realization
\[
\dot H^s(\mathbb R^d)
=
\left\{
f\in\mathcal S'(\mathbb R^d):
\widehat f\in L^1_{\mathrm{loc}}(\mathbb R^d),
\quad
|\xi|^s\widehat f(\xi)\in L^2(\mathbb R^d)
\right\},
\]
where the condition
\(\widehat f\in L^1_{\mathrm{loc}}(\mathbb R^d)\) means that
\(\widehat f\) is represented by a locally integrable function. Its norm
is given by
\[
\|f\|_{\dot H^s}^2
=(2\pi)^{-d}\int_{\mathbb R^d}
|\xi|^{2s}|\widehat f(\xi)|^2\,\dd\xi.
\]

We denote by \(|D|^s\) the Fourier multiplier with symbol
\(|\xi|^s\), namely,
\[
|D|^sf
=\mathcal F_x^{-1}
\bigl(|\xi|^s\widehat f(\xi)\bigr).
\]
Plancherel's theorem gives
\[
\|\U(t)f\|_{\dot H^s}
=\|f\|_{\dot H^s}
=\||D|^sf\|_{L^2},
\qquad
f\in\dot H^s(\mathbb R^d),\quad t\in\mathbb R.
\]

If \(h\in L^2(\mathbb R^d)\) has compact Fourier support, then
\(\widehat h\in L^1(\mathbb R^d)\). In this case, we always take
\(\U(t)h\) to be the representative defined by the absolutely
convergent Fourier integral
\[
\U(t)h(x)
=(2\pi)^{-d}\int_{\mathbb R^d}
e^{i(x\cdot\xi+t|\xi|)}
\widehat h(\xi)\,\dd\xi.
\]

Mixed norms are evaluated from right to left. For example,
\[
\|F\|_{L_t^pL_x^r}
=
\left(
\int_{\mathbb R}
\left(
\int_{\mathbb R^d}|F(x,t)|^r\,\dd x
\right)^{p/r}
\dd t
\right)^{1/p},
\]
with the usual modifications when either exponent is infinite. For a
time interval \(J\subset\mathbb R\), we write
\[
\|F\|_{L_x^qL_t^\infty(\mathbb R^d\times J)}
=
\left\|
\operatorname*{ess\,sup}_{t\in J}|F(\cdot,t)|
\right\|_{L_x^q(\mathbb R^d)}.
\]
Thus, in \(L_x^qL_t^\infty\), the temporal essential supremum is taken
before the spatial \(L^q\) norm. Whenever \(t\mapsto F(x,t)\) is
continuous, the temporal essential supremum agrees with the ordinary
supremum.

We also use the abbreviations
\[
L_{x,t}^p
=L^p(\mathbb R^d\times\mathbb R).
\]
If \(I,J\subset\mathbb R\), their Minkowski sum is
\[
I+J=\{\sigma+\tau:\sigma\in I,\ \tau\in J\}.
\]

Finally, \(X\lesssim Y\) means that \(X\le CY\) for a constant \(C>0\).
Unless otherwise indicated, \(C\) may depend on the dimension, the
fixed exponents, and the fixed smooth cut-offs, but is independent of
frequency parameters such as \(N\) and \(M\).

\subsection{A temporal Bernstein inequality}
We shall use the following temporal Bernstein inequality, which controls a supremum in time by a space-time \(L^p\) norm. It is a consequence of the one-dimensional Bernstein inequality (see \cite[Lemma~2.1]{BCD}); since modulation removes the center of the temporal frequency interval, the constant depends only on its width and is uniform in its location.
\begin{lemma}\label{lem:time-bernstein}
	Let \(1<p<\infty\), \(L>0\), and \(A\in\mathbb R\). Suppose that
	\(F\in L^p(\mathbb R^d\times\mathbb R)\) admits a representative such that
	\(x\mapsto F(x,t)\) is measurable for every \(t\in\mathbb R\) and,
	for almost every \(x\in\mathbb R^d\), the function
	\(t\mapsto F(x,t)\) is bounded and continuous, with
	\[
	\operatorname{supp}\mathcal F_tF(x,\cdot)\subset[A-L,A+L]
	\]
	in \(\mathcal S'(\mathbb R)\). Then
	\begin{equation}\label{eq:time-bernstein}
		\left\|\sup_{t\in\mathbb R}|F(x,t)|\right\|_{L_x^p}
		\le C_pL^{1/p}\|F\|_{L_{x,t}^p},
	\end{equation}
	where \(C_p\) is independent of \(A\) and \(L\).
\end{lemma}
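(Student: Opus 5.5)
We reduce Lemma~\ref{lem:time-bernstein} to a fixed-$x$ one-dimensional statement and then integrate in $x$. Fix an $x$ outside the exceptional null set. Write $g(t)=F(x,t)$, which is bounded and continuous. Put $h(t)=e^{-iAt}g(t)$. Then $|h|=|g|$ pointwise, and $\operatorname{supp}\mathcal F_t h\subset[-L,L]$, since multiplying by $e^{-iAt}$ translates the temporal Fourier transform by $-A$. This modulation is what makes the constant independent of $A$.

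Next we remove $L$ by rescaling: set $h_L(t)=h(t/L)$. Then $\operatorname{supp}\mathcal F_t h_L\subset[-1,1]$, we have $\sup|h_L|=\sup|h|$, and $\|h_L\|_{L^p}=L^{1/p}\|h\|_{L^p}$. So it suffices to prove, for bounded continuous $h$ with spectrum in $[-1,1]$ and $h\in L^p(\mathbb R)$,
\[
\sup_t|h(t)|\le C_p\|h\|_{L^p(\mathbb R)}.
\]
We may assume $\|h\|_{L^p}<\infty$, which holds for a.e.\ $x$ by Fubini, since $F\in L^p_{x,t}$.

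To prove this, choose $\psi\in\mathcal S(\mathbb R)$ with $\mathcal F_t\psi=1$ on $[-1,1]$. Then $h=h*\psi$ in $\mathcal S'$. Because $h$ is bounded (hence tempered) and continuous, and $\psi$ is Schwartz, both sides are continuous functions, so the identity holds pointwise for every $t$. Hölder's inequality then gives
\[
|h(t)|\le\|\psi\|_{L^{p'}}\|h\|_{L^p},
\]
so $C_p=\|\psi\|_{L^{p'}}$ works. This is the Bernstein inequality of \cite[Lemma~2.1]{BCD} in the case $q=\infty$. Undoing the rescaling gives $\sup_t|F(x,t)|\le C_pL^{1/p}\|F(x,\cdot)\|_{L^p_t}$ for a.e.\ $x$.

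Finally, we raise this to the $p$-th power and integrate in $x$; by Fubini this gives \eqref{eq:time-bernstein}. The only delicate point is measurability of $x\mapsto\sup_t|F(x,t)|$. Continuity in $t$ lets us replace the supremum by a supremum over rational $t$, a countable family of functions each measurable in $x$ by hypothesis.

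The main (mild) obstacle is justifying the reproducing identity $h=h*\psi$ pointwise when the spectral condition is only distributional. We handle it by noting that $\mathcal F_t(h*\psi)=\mathcal F_t\psi\cdot\mathcal F_t h=\mathcal F_t h$, because $\mathcal F_t\psi=1$ on a neighborhood of the support. This requires choosing $\mathcal F_t\psi=1$ on, say, $[-2,2]$, so that the product is legitimate in $\mathcal S'$. Continuity of both sides then upgrades the distributional equality to equality at every point.
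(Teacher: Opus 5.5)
Your proof is correct and follows the paper's argument. You modulate by $e^{-iAt}$ to centre the temporal spectrum at the origin, apply the one-dimensional Bernstein inequality for almost every $x$, and integrate in $x$ after checking that the supremum is measurable. The only differences are minor: you prove the Bernstein inequality yourself (rescaling to $L=1$ and using the reproducing identity $h=h*\psi$) where the paper quotes it, and the paper sets $F=0$ on the exceptional null set, which is the cleanest way to make your rational-times measurability argument hold everywhere.
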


\begin{proof}
	For almost every \(x\), Fubini's theorem yields \(F(x,\cdot)\in L^p(\mathbb R)\). The function
	\(G_x(t)=e^{-iAt}F(x,t)\) has Fourier support contained in \([-L,L]\).
	The one-dimensional Bernstein inequality therefore yields
	\[
	\|F(x,\cdot)\|_{L_t^\infty}
	=\|G_x\|_{L_t^\infty}
	\le C_pL^{1/p}\|G_x\|_{L_t^p}
	=C_pL^{1/p}\|F(x,\cdot)\|_{L_t^p}.
	\]
	Since \(G_x\) is continuous in \(t\), its essential supremum equals its supremum.
	After setting \(F(x,t)=0\) for \(x\) in the exceptional null set, this supremum is measurable in \(x\). Taking the \(L_x^p\) norm then yields \eqref{eq:time-bernstein}.
\end{proof}
\subsection{Non-endpoint Strichartz estimates on annuli}

We recall the frequency-localized form of the standard Strichartz
estimate for the half-wave propagator; see
\cite[Corollary~1.3 and Section~8]{KT}.

\begin{lemma}\label{lem:strichartz}
	Let \(d\ge2\) and \(2<p,r<\infty\) satisfy
	\begin{equation}\label{eq:wave-admissible}
		\frac1p+\frac{d-1}{2r}=\frac{d-1}{4},
	\end{equation}
	and set
	\[
	\gamma=\frac d2-\frac dr-\frac1p.
	\]
	Then, for every \(N>0\) and every \(h_N\in L^2(\mathbb R^d)\) with
$\operatorname{supp}\widehat h_N\subset\mathcal \A_N$,
	we have
	\begin{equation}\label{eq:strichartz}
		\|\U(t)h_N\|_{L_t^pL_x^r}
		\le C_{d,p,r}N^\gamma\|h_N\|_{L^2}.
	\end{equation}
	The constant is independent of \(N\) and of any further restriction
	of the Fourier support within \(\mathcal \A_N\).
\end{lemma}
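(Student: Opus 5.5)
The plan is to reduce to the unit frequency scale by scaling, and then apply the abstract Keel--Tao machinery to a fixed frequency-localized propagator. Uniformity under further restriction of the Fourier support comes from inserting a fixed cut-off.

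\emph{Scaling.} Given \(h_N\) with \(\supp\widehat h_N\subset\A_N\), set \(h(x)=h_N(x/N)\), so that \(\supp\widehat h\subset\A_1\). A change of variables in \eqref{eq:propagator} gives
\[
\U(t)h_N(x)=\bigl(\U(Nt)h\bigr)(Nx).
\]
Hence
\[
\|\U(t)h_N\|_{L_t^pL_x^r}=N^{-1/p-d/r}\|\U(t)h\|_{L_t^pL_x^r},
\qquad
\|h_N\|_{L^2}=N^{-d/2}\|h\|_{L^2}.
\]
The exponent bookkeeping therefore produces exactly \(N^{\gamma}\) with \(\gamma=d/2-d/r-1/p\), and it suffices to treat \(N=1\) with a constant independent of \(h\).

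\emph{Fixed cut-off.} Fix \(\chi\in C_c^\infty\) supported in \(\{1/4\le|\xi|\le4\}\) with \(\chi\equiv1\) on \(\A_1\), and put \(P=\chi(D)\). Every \(h\) with \(\supp\widehat h\subset\A_1\), including any further restriction of its support, satisfies \(h=Ph\). Thus it is enough to prove
\[
\|T h\|_{L^p_tL^r_x}\lesssim\|h\|_{L^2}
\qquad\text{for all }h\in L^2,\quad\text{where } T h(t)=\U(t)Ph .
\]
This immediately gives the claimed independence of the constant from the support.

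\emph{Keel--Tao hypotheses.} I would verify the two hypotheses of Keel--Tao \cite{KT}.
\begin{enumerate}[label=(\roman*)]
\item The energy bound \(\|\U(t)P h\|_{L^2}\lesssim\|h\|_{L^2}\) is Plancherel.
\item The untruncated dispersive bound is
\[
\|\U(t)P^2\U(s)^*g\|_{L^\infty}\lesssim(1+|t-s|)^{-(d-1)/2}\|g\|_{L^1}.
\]
It reduces to the kernel estimate
\[
\Bigl|\int e^{i(x\cdot\xi+t|\xi|)}\chi(\xi)^2\dd\xi\Bigr|\lesssim(1+|t|)^{-(d-1)/2}.
\]
\end{enumerate}
For the kernel estimate, boundedness for \(|t|\le1\) is trivial. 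For \(|t|\ge1\), pass to polar coordinates \(\xi=\rho\omega\). The angular integral is \(\widehat{\sigma}(\rho x)\), the Fourier transform of surface measure on \(S^{d-1}\), and it decays like \((1+\rho|x|)^{-(d-1)/2}\) with the standard asymptotic expansion into the phases \(e^{\pm i\rho|x|}\). Consider the radial integral in \(\rho\) with phase \(\rho(t\pm|x|)\).
\begin{enumerate}[label=(\alph*)]
\item When \(|x|\approx|t|\), the angular decay already gives \(|t|^{-(d-1)/2}\).
\item When \(\bigl||x|-|t|\bigr|\gtrsim|t|\), repeated integration by parts in \(\rho\) gives rapid decay in \(|t|\).
\end{enumerate}

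\emph{Conclusion.} With decay rate \(\sigma=(d-1)/2\), the pair \((p,r)\) in \eqref{eq:wave-admissible} is sharp \(\sigma\)-admissible with \(p>2\), so it is non-endpoint. For such pairs the \(TT^*\) argument closes by interpolating the energy and dispersive bounds and applying Hardy--Littlewood--Sobolev in time, using \(2/p<1\). This is exactly Keel--Tao's Theorem~1.2 and yields the bound at \(N=1\).

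I expect the only genuinely nontrivial step to be the dispersive kernel estimate. It is classical: stationary phase on the cone, using the \(d-1\) nonvanishing principal curvatures of the sphere. In practice I would cite it rather than reprove it, together with \cite[Corollary~1.3 and Section~8]{KT}.
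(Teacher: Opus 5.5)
Your proposal is correct, but it is organized differently from the paper. The paper never works with the frequency-localized propagator directly. It quotes the global homogeneous estimate $\|\U(t)f\|_{L^p_tL^r_x}\lesssim\|f\|_{\dot H^\gamma}$ from Keel--Tao (Corollary~1.3), and then uses $|\xi|\sim N$ on $\supp\widehat h_N$ to get $\|h_N\|_{\dot H^\gamma}\lesssim N^\gamma\|h_N\|_{L^2}$. Uniformity under further restriction of the support is then automatic, since the global bound holds for every datum.

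You instead rescale to $N=1$, factor through a fixed cut-off $\chi(D)$, and apply the abstract non-endpoint Keel--Tao theorem to $\U(t)\chi(D)$ with the frequency-localized dispersive bound. In effect you prove directly the unit-frequency estimate from which the global $\dot H^\gamma$ bound is normally assembled by Littlewood--Paley summation. This avoids going up to $\dot H^\gamma$ and back down, and makes the factor $N^\gamma$ pure scaling. The cost is that you must supply (or cite) the dispersive kernel estimate, which the paper's one-line reduction inherits from its citation.

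One small detail to make explicit: your polar-coordinate step (angular integral equal to $\widehat\sigma(\rho x)$) uses that $\chi$ is radial. The identity $T(t)T(s)^*=\U(t-s)\chi(D)^2$ uses that $\chi$ is real-valued. So choose $\chi$ radial and real-valued.
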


The factor \(N^\gamma\) is a consequence of the homogeneous Strichartz estimate together with the Fourier support condition:
\[
\|\U(t)h_N\|_{L_t^pL_x^r}
\lesssim\|h_N\|_{\dot H^\gamma}
\lesssim N^\gamma\|h_N\|_{L^2}.
\]
Since this estimate holds for every datum whose Fourier support lies in \(\mathcal \A_N\), the same constant remains valid for data supported in any subset of this annulus.

\subsection{Temporal Fourier support of products of half-waves}
To apply the temporal Bernstein inequality to products of frequency-localized waves, we first need to identify their Fourier support in the time variable. For the half-wave propagator
$U(t)$,
the temporal frequency is determined by the radial spatial frequency $|\xi|$. The following lemma makes this correspondence precise and shows that multiplication of two waves adds their temporal frequencies.
\begin{lemma}\label{lem:temporal-support}
	Let \(h,g\in L^2(\R^d)\) and suppose that
	\[
	\supp\widehat h\subset\{\xi:|\xi|\in I\},
	\qquad
	\supp\widehat g\subset\{\xi:|\xi|\in J\},
	\]
	where \(I,J\subset[0,\infty)\) are compact intervals.
	Then the Fourier integral representations of \(\U(t)h(x)\) and \(\U(t)g(x)\) are bounded and jointly continuous in $(x,t)$. Moreover, for every $x\in\mathbb R^d$, we have
	\begin{equation}\label{eq:support-sum}
		\operatorname{supp}\mathcal F_t\biggl(\U(t)h(x)\U(t)g(x)\biggr)
		\subset I+J,
	\end{equation}
	where the temporal Fourier transform is understood in the sense of tempered distributions.
\end{lemma}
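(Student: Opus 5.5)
The plan is to represent $\U(t)h(x)$ as a superposition of temporal exponentials $e^{it\rho}$ with $\rho\in I$, and then read off the support of the product from the fact that frequency measures convolve. The argument has three steps.

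\textbf{Step 1: boundedness and continuity.} Since $\supp\widehat h$ lies in a compact set and $h\in L^2$, we have $\widehat h\in L^1$, and likewise $\widehat g\in L^1$. The integrand $e^{i(x\cdot\xi+t|\xi|)}\widehat h(\xi)$ is continuous in $(x,t)$ and dominated by $|\widehat h(\xi)|$. Dominated convergence therefore shows that $\U(t)h(x)$ is jointly continuous and bounded by $(2\pi)^{-d}\norm{\widehat h}_{L^1}$. The same holds for $g$, and hence for the product.

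\textbf{Step 2: representation in time.} Fix $x$. Define a finite complex Borel measure $\mu_x$ on $\R$ as the pushforward of $(2\pi)^{-d}e^{ix\cdot\xi}\widehat h(\xi)\dd\xi$ under the map $\xi\mapsto|\xi|$. This is finite because $\widehat h\in L^1$, and it is supported in $I$ because $\widehat h$ vanishes outside $\{|\xi|\in I\}$. The change of variables formula gives
\[
\U(t)h(x)=\int_{\R}e^{it\rho}\dd\mu_x(\rho).
\]
Similarly $\U(t)g(x)=\int e^{it\sigma}\dd\nu_x(\sigma)$, with $\nu_x$ supported in $J$. By Fubini's theorem the product is
\[
\int_{\R}e^{it\lambda}\dd(\mu_x*\nu_x)(\lambda),
\]
where $\mu_x*\nu_x$ is the pushforward of $\mu_x\otimes\nu_x$ under $(\rho,\sigma)\mapsto\rho+\sigma$. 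This measure is supported in $I+J$, which is compact because it is the image of the compact set $I\times J$ under a continuous map.

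\textbf{Step 3: identifying the temporal Fourier transform.} We need to show that $\mathcal F_t$ of the bounded continuous function $F(t)=\int e^{it\lambda}\dd\eta(\lambda)$, with $\eta=\mu_x*\nu_x$, equals $2\pi\,\eta$ up to reflection conventions. With the paper's convention, pair $F$ with a test function $\varphi\in\mathcal S(\R)$ and apply Fubini:
\[
\langle\mathcal F_tF,\varphi\rangle=\langle F,\mathcal F_t\varphi\rangle
=\int\!\!\int e^{it\lambda}\,\mathcal F_t\varphi(t)\dd t\dd\eta(\lambda)
=2\pi\int\varphi(\lambda)\dd\eta(\lambda),
\]
using Fourier inversion for $\varphi$. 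Thus $\mathcal F_tF=2\pi\eta$ as tempered distributions, and its support is contained in $\supp\eta\subset I+J$, which is \eqref{eq:support-sum}.

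Nothing here is a genuine obstacle. The only point needing care is justifying Fubini in Step 3, and this is immediate because $\eta$ is finite and $\mathcal F_t\varphi\in L^1$.
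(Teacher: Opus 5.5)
Your proposal is correct and is essentially the paper's proof: \(\widehat h,\widehat g\in L^1\) with dominated convergence for boundedness and joint continuity, the pushforward of \((2\pi)^{-d}e^{ix\cdot\xi}\widehat h(\xi)\,\mathrm d\xi\) under \(\xi\mapsto|\xi|\) as a finite measure supported in \(I\), Fubini to write the product as the Fourier--Stieltjes transform of \(\mu_x*\nu_x\), and the identification \(\mathcal F_t\bigl(\U(t)h(x)\,\U(t)g(x)\bigr)=2\pi(\mu_x*\nu_x)\). Your Step 3 spells out the duality computation that the paper only asserts. As that computation shows, no reflection actually arises with the paper's Fourier convention.
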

\begin{proof}
	Since $I$ and $J$ are compact, the Fourier transforms $\hat{h}$ and $\hat{g}$ are supported on the sets of finite measure and hence belong to $L^1(\mathbb R^d)$ by the Cauchy-Schwarz inequality. Their Fourier integral representations are therefore absolutely convergent and uniformly bounded. Since the phases depend  continuously on $(x,t)$, while the integrals are dominated by $|\hat h|$ and $|\hat g|$, the dominated convergence theorem further shows that $U(t)h(x)$ and $U(t)g(x)$ are jointly continuous in $(x,t)$.
	
	We now determine the temporal Fourier support of the product. Fix
	\(x\in\R^d\). To collect the spatial frequencies according to their
	associated temporal frequency \(|\xi|\), define a complex measure
	\(\mu_x\) on \(\R\) by
	\[
	\mu_x(E)
	=(2\pi)^{-d}
	\int_{\{\xi:\,|\xi|\in E\}}
	e^{ix\cdot\xi}\widehat h(\xi)\dd\xi
	\]
	for every Borel set \(E\subset\R\). Since
	\(\widehat h\in L^1(\R^d)\), the measure \(\mu_x\) is finite.
	Moreover, the frequency localization of \(h\) implies that
	\[
	\supp\mu_x\subset I.
	\]
	By construction,
	\[
	\U(t)h(x)
	=\int_{\R}e^{it\sigma}\dd\mu_x(\sigma).
	\]
	Similarly, there is a finite complex measure \(\nu_x\), supported in
	\(J\), such that
	\[
	\U(t)g(x)
	=\int_{\R}e^{it\tau}\dd\nu_x(\tau).
	\]
	
	Since both measures are finite, Fubini's theorem gives
	\[
	\begin{aligned}
		\U(t)h(x)\U(t)g(x)
		&=\int_{\R}\int_{\R}
		e^{it\sigma}e^{it\tau}
		\dd\mu_x(\sigma)\dd\nu_x(\tau)\\
		&=\int_{\R}\int_{\R}
		e^{it(\sigma+\tau)}
		\dd\mu_x(\sigma)\dd\nu_x(\tau)\\
		&=\int_{\R}e^{it\rho}
		\dd(\mu_x*\nu_x)(\rho).
	\end{aligned}
	\]
	Thus, the product has a Fourier--Stieltjes representation associated
	with the convolution measure \(\mu_x*\nu_x\). With our temporal Fourier
	convention,
	\[
	\mathcal F_t\bigl(\U(t)h(x)\U(t)g(x)\bigr)
	=2\pi(\mu_x*\nu_x)
	\]
	in \(\mathcal S'(\R)\). Since
	\[
	\operatorname{supp}(\mu_x*\nu_x)
	\subset \operatorname{supp}\mu_x+\operatorname{supp}\nu_x
	\subset I+J,
	\]
	we obtain that
	\[
	\operatorname{supp}\mathcal F_t\bigl(\U(t)h(x)\U(t)g(x)\bigr)
	\subset I+J,
	\]
	This proves the lemma.
\end{proof}

\section{A maximal product estimate with frequency decay}
\label{sec:bilinear}

\subsection{Choice of exponents in dimensions \texorpdfstring{\(d\ge3\)}{d >= 3}}
We first choose two  Strichartz-admissible pairs so that H\"{o}lder inequality gives an $L_{x,t}^p$ estimate for the product of two half-waves. The corresponding regularities are chosen so that the radial decomposition in the next subsection produces a positive power of the frequency ratio.
\begin{lemma}\label{lem:parameters}
	Let \(d\ge3\), and set
	\[
	p=\frac{d+1}{d-1},
	\qquad
	s=\frac d{d+1}.
	\]
	Define
	\begin{equation}\label{eq:parameters}
		\alpha=\frac12-\frac d{d^2-1},
		\qquad
		\beta=\frac12+\frac d{d^2-1},
		\qquad
		\delta=\frac1{d^2-1}.
	\end{equation}
	For each \(\gamma\in\{\alpha,\beta\}\), let
	\begin{equation}\label{eq:ar}
		a_\gamma=\frac p\gamma,
		\qquad
		\frac1{r_\gamma}=\frac12-\frac{2\gamma}{d+1}.
	\end{equation}
	Then \((a_\gamma,r_\gamma)\) satisfies the hypotheses of
	Lemma~\ref{lem:strichartz} and has Sobolev regularity \(\gamma\).
	Moreover,
	\begin{equation}\label{eq:holder}
		\frac1{a_\alpha}+\frac1{a_\beta}
		=
		\frac1{r_\alpha}+\frac1{r_\beta}
		=
		\frac1p,
	\end{equation}
	and
	\begin{equation}\label{eq:gain-identities}
		\alpha+\frac12=s-\delta,
		\qquad
		\beta+\frac1p-\frac12=s+\delta.
	\end{equation}
\end{lemma}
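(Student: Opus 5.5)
The lemma is a direct computation, so I will verify its assertions one at a time. Throughout I use \(p=(d+1)/(d-1)\), so \(1/p=(d-1)/(d+1)\), and \(s=d/(d+1)\).

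\emph{Admissibility and regularity.} For \(\gamma\in\{\alpha,\beta\}\) we have \(1/a_\gamma=\gamma/p=\gamma(d-1)/(d+1)\). Substituting into \eqref{eq:wave-admissible} gives
\[
\frac{\gamma(d-1)}{d+1}+\frac{d-1}{2}\Bigl(\frac12-\frac{2\gamma}{d+1}\Bigr)=\frac{d-1}{4},
\]
since the \(\gamma\)-terms cancel. For the Sobolev regularity I compute
\[
\frac d2-\frac d{r_\gamma}-\frac1{a_\gamma}=\frac{2d\gamma}{d+1}-\frac{\gamma(d-1)}{d+1}=\gamma .
\]
It remains to check the open ranges \(2<a_\gamma,r_\gamma<\infty\).
\begin{itemize}
\item The condition \(a_\gamma>2\) means \(\gamma<p/2=(d+1)/(2(d-1))\).
\item The condition \(r_\gamma<\infty\) means \(\gamma<(d+1)/4\).
\item The conditions \(a_\gamma<\infty\) and \(r_\gamma>2\) both follow from \(\gamma>0\).
\end{itemize}
Positivity of \(\alpha\) is \(d^2-1>2d\), which holds for \(d\ge3\): at \(d=3\) it reads \(8>6\). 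For the upper bounds it suffices to treat \(\beta\), the larger of the two. The bound \(\beta<(d+1)/4\) follows from \(\beta\le 1/2+3/8<1\le(d+1)/4\). The bound \(\beta<p/2\) is equivalent to \(d/(d^2-1)<1/(d-1)\), which is true. This is the only step where the restriction \(d\ge3\) and the exact size of the perturbation matter, so I will check the boundary case \(d=3\) explicitly, where \(\alpha=1/8\) and \(\beta=7/8\).

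\emph{Hölder identities.} Since \(\alpha+\beta=1\),
\[
\frac1{a_\alpha}+\frac1{a_\beta}=\frac{\alpha+\beta}{p}=\frac1p,
\qquad
\frac1{r_\alpha}+\frac1{r_\beta}=1-\frac{2}{d+1}=\frac{d-1}{d+1}=\frac1p .
\]

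\emph{Gain identities.} For the first identity,
\[
\alpha+\frac12=1-\frac{d}{d^2-1}
\quad\text{and}\quad
s-\delta=\frac{d(d-1)-1}{d^2-1}=\frac{d^2-d-1}{d^2-1},
\]
and these agree. For the second, \(\beta+1/p-1/2=d/(d^2-1)+(d-1)/(d+1)\). Over the common denominator \(d^2-1\) this equals \((d^2-d+1)/(d^2-1)\), which is \(s+\delta\). All steps are routine algebra, with no real obstacle beyond carefully verifying the strict inequalities.
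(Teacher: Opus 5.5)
Your verification is correct and follows the paper's proof essentially step by step. It uses the same cancellation for admissibility, the same regularity computation, the same range checks (your inequality \(d/(d^2-1)<1/(d-1)\) is equivalent to the paper's \(p-2\beta=2/(d^2-1)>0\)), and the same H\"older and gain identities; the only cosmetic difference is that you expand \(\beta+1/p-1/2\) directly instead of using \(2s=1+1/p\) and \(\alpha+\beta=1\).

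One minor correction to your commentary: \(\beta<p/2\) holds for every \(d\ge2\). The hypothesis \(d\ge3\) is actually used in \(\alpha>0\) and in \(\beta\le 7/8<1\le(d+1)/4\), not in that step.
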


\begin{proof}
	We first verify the range of the exponents.
	Since \(d^2-2d-1>0\) for \(d\ge3\), it follows that
	\(0<\alpha<\beta<1\). Moreover,
	\[
	p-2\beta=\frac2{d^2-1}>0,
	\]
	and hence
	$2<a_\beta<a_\alpha<\infty$.
	For each \(\gamma\in\{\alpha,\beta\}\), the inequalities
	$0<\gamma<1\le\frac{d+1}{4}$ imply that
	\[
	0<\frac1{r_\gamma}<\frac12.
	\]
	Consequently, \(2<r_\gamma<\infty\).
	
	By the definitions of \(a_\gamma\) and \(r_\gamma\), we have
	\[
	\begin{aligned}
		\frac1{a_\gamma}+\frac{d-1}{2r_\gamma}
		=
		\frac{\gamma(d-1)}{d+1}
		+\frac{d-1}{4}
		-\frac{\gamma(d-1)}{d+1}=\frac{d-1}{4}.
	\end{aligned}
	\]
	Therefore both pairs satisfy the admissibility condition in
	Lemma~\ref{lem:strichartz}. Their Sobolev regularities are given by
	\[
	\frac d2-\frac d{r_\gamma}-\frac1{a_\gamma}
	=
	\frac{2d\gamma}{d+1}
	-\frac{(d-1)\gamma}{d+1}
	=\gamma.
	\]
	
	Since \(\alpha+\beta=1\), we also have
	\[
	\frac1{a_\alpha}+\frac1{a_\beta}
	=\frac{\alpha+\beta}{p}
	=\frac1p,
	\]
	and
	\[
	\frac1{r_\alpha}+\frac1{r_\beta}=
	1-\frac{2(\alpha+\beta)}{d+1}=\frac{d-1}{d+1}=\frac1p.
	\]
	This proves \eqref{eq:holder}.
	
	Finally,
	\[
	\begin{aligned}
		s-\alpha-\frac12=\frac d{d+1}-1+\frac d{d^2-1}=-\frac1{d+1}+\frac d{d^2-1}=\frac1{d^2-1} =\delta.
	\end{aligned}
	\]
	Using \(\alpha+\beta=1\) and \(1+1/p=2s\), we obtain
	\[
	\beta+\frac1p-\frac12
	=
	2s-\left(\alpha+\frac12\right)
	=
	s+\delta.
	\]
	This proves \eqref{eq:gain-identities}.
\end{proof}

\subsection{Radial decomposition of the higher annulus}

\begin{proposition}\label{prop:bilinear-high}
	For every $d\ge3$, the estimate in Proposition~\ref{prop:bilinear-intro} holds with \(\delta_d=1/(d^2-1)\).
\end{proposition}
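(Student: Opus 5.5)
The plan follows the mechanism in \eqref{eq:mechanism}, with the exponents fixed by Lemma~\ref{lem:parameters}. Write \(p=p_d\), \(s=s_d\), and take \(\alpha,\beta,\delta\) and the pairs \((a_\alpha,r_\alpha)\), \((a_\beta,r_\beta)\) from that lemma. Since \(N\ge M\), the annulus \(\A_N\) has radial extent \([N/2,2N]\). Partition it into \(K\le 2N/M\) consecutive radial intervals \(I_1,\dots,I_K\), each of length at most \(M\). Set \(\widehat{f_{N,k}}=\widehat f_N\,\mathbf 1_{\{|\xi|\in I_k\}}\). Then \(f_N=\sum_k f_{N,k}\) and \(\sum_k\norm{f_{N,k}}_{L^2}^2=\norm{f_N}_{L^2}^2\). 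The triangle inequality for the supremum and for the \(L^p\) norm gives
\[
\Bigl\|\sup_t|\U(t)f_N\,\U(t)g_M|\Bigr\|_{L^p}\le\sum_{k=1}^K\Bigl\|\sup_t|\U(t)f_{N,k}\,\U(t)g_M|\Bigr\|_{L^p}.
\]

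Fix \(k\) and set \(F_k(x,t)=\U(t)f_{N,k}(x)\,\U(t)g_M(x)\). By Lemma~\ref{lem:temporal-support}, \(F_k\) is bounded and jointly continuous. Its temporal Fourier support lies in \(I_k+[M/2,2M]\), which is an interval of length at most \(3M/2+M\lesssim M\), centered somewhere of size \(O(N)\). Lemma~\ref{lem:time-bernstein} requires \(F_k\in L^p_{x,t}\); this will follow from the H\"older bound below. Granting it, the lemma gives
\[
\Bigl\|\sup_t|F_k|\Bigr\|_{L^p_x}\lesssim M^{1/p}\norm{F_k}_{L^p_{x,t}}.
\]
The constant does not depend on the center of the support. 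Next apply H\"older's inequality in \(x\) and in \(t\), using \eqref{eq:holder}:
\[
\norm{F_k}_{L^p_{x,t}}\le\norm{\U(t)f_{N,k}}_{L^{a_\alpha}_tL^{r_\alpha}_x}\norm{\U(t)g_M}_{L^{a_\beta}_tL^{r_\beta}_x}.
\]
Then apply Lemma~\ref{lem:strichartz}, which is uniform under restriction of the Fourier support within the annulus. This bounds the right-hand side by \(N^\alpha\norm{f_{N,k}}_{L^2}\,M^\beta\norm{g_M}_{L^2}\).

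One technical point needs care: the H\"older step must be done in mixed norms. First take H\"older in \(x\) at each fixed \(t\), using \(1/r_\alpha+1/r_\beta=1/p\). This bounds \(\norm{F_k(\cdot,t)}_{L^p_x}\). Then take H\"older in \(t\), using \(1/a_\alpha+1/a_\beta=1/p\). Since the \(L^p_{x,t}\) norm equals \(L^p_tL^p_x\), this is legitimate.

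Summing over \(k\) with Cauchy--Schwarz gives \(\sum_k\norm{f_{N,k}}_{L^2}\le K^{1/2}\norm{f_N}_{L^2}\lesssim (N/M)^{1/2}\norm{f_N}_{L^2}\). The total coefficient is therefore
\[
N^{\alpha+1/2}M^{\beta+1/p-1/2}=N^{s-\delta}M^{s+\delta}=\Bigl(\frac MN\Bigr)^{\delta}N^sM^s,
\]
by \eqref{eq:gain-identities}. This is exactly \eqref{eq:bilinear-intro} with \(\delta_d=1/(d^2-1)\).

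The main obstacle is not the bookkeeping, which Lemma~\ref{lem:parameters} has already arranged. It is checking that the hypotheses of Lemma~\ref{lem:time-bernstein} hold for each \(F_k\) with constants uniform in \(N\), \(M\) and \(k\): continuity and boundedness for almost every \(x\), membership in \(L^p_{x,t}\), and a temporal support of width \(\lesssim M\). In particular, one has to see that the location of the support, of order \(N\), is harmless. This is the reason for using the modulation-invariant form of Bernstein's inequality. There is also a degenerate case: if \(M\) is comparable to \(N\), then \(K\) is bounded, and the same argument goes through unchanged.
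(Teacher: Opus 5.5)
Your proposal is correct and follows essentially the same argument as the paper: slicing the high annulus into $\lesssim N/M$ radial shells of width at most $M$, H\"older plus Strichartz with the pairs from Lemma~\ref{lem:parameters}, the modulated temporal Bernstein inequality at scale $M$, and Cauchy--Schwarz over the shells. One small slip: the count $K\le 2N/M$ is not achievable for every ratio (for $N/M=1.4$ the radial length $2.1M$ forces $K\ge 3>2.8$), but only $K\lesssim N/M$ is used, so nothing changes.
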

\begin{proof}
	Fix $d\ge3$, and let $p$, $s$, $\alpha$, $\beta$, $\delta$ be as in Lemma \ref{lem:parameters}. Partition $[N/2, 2N]$ into consecutive half-open intervals $I_\ell$, each of length $M$, except possibly the last. Since $M\le N$, their number satisfies $K\lesssim \frac NM$.
	For each $\ell$, define $f_{N,\ell}$ by
	\begin{equation}\label{eq:shell-definition}
		\widehat f_{N,\ell}(\xi)
		=\mathbf1_{I_\ell}(|\xi|)\widehat f_N(\xi),
	\end{equation}
	where \(\mathbf{1}_{I_\ell}(\cdot)\) is the characteristic function of \(I_\ell\).
	The disjointness of the intervals and Plancherel's theorem give
	\begin{equation}\label{eq:shell-orthogonality}
		f_N=\sum_{\ell=1}^Kf_{N,\ell},
\qquad\sum_{\ell=1}^K\norm{f_{N,\ell}}_{L^2}^2=\norm{f_N}_{L^2}^2.
	\end{equation}
	Set $u_\ell(x,t)=\U(t)f_{N,\ell}(x)$ and $v(x,t)=\U(t)g_M(x)$. Since $\operatorname{supp}\widehat f_{N,\ell}\subset\mathcal \A_N$, Lemma \ref{lem:strichartz} applies to each $u_\ell$ with a constant independent of $\ell$, $N$ and $M$. H\"older's inequality in space and time, together with \eqref{eq:holder} and Lemma \ref{lem:strichartz}, therefore yields
	\begin{equation}
		\label{ulv 1}
		\norm{u_{\ell} v}_{L^p_{x,t}}\le \norm{u_{\ell}}_{L_t^{a_\alpha}L_x^{r_\alpha}}\norm{v}_{L_t^{a_\beta}L_x^{r_\beta}} \le C_dN^\alpha M^\beta  \norm{f_{N,\ell}}_{L^2}\norm{g_M}_{L^2}.
	\end{equation}
	Let $\rho_\ell$ be the left point of $I_\ell$, so that $\overline{I_\ell}\subset [\rho_\ell,\rho_\ell+M]$. Since $\operatorname{supp}\widehat g_M\subset \A_M$, Lemma \ref{lem:temporal-support} gives, for every $x\in \mathbb R^d$,
	$$\operatorname{supp}\mathcal F_t\big(u_\ell(x,\cdot)v(x,\cdot)\big)
	\subset[\rho_\ell+M/2,\rho_\ell+3M].$$
	In particular, this support is contained in $[\rho_\ell-3M,\rho_\ell+3M]$. By Lemma \ref{lem:temporal-support}, the product $u_\ell v$ is bounded and jointly continuous. Together with \eqref{ulv 1}, these properties allow us to apply Lemma \ref{lem:time-bernstein} with $A=\rho_{\ell}$ and $L=3M$. We obtain
	\begin{equation}\label{eq:shell-maximal}
		\left\|\sup_{t\in\mathbb R}|u_\ell v|\right\|_{L_x^p}\le C_pM^{1/p}\|u_\ell v\|_{L_{x,t}^p}
		\le C_{d,p}N^\alpha M^{\beta+1/p}
		\norm{f_{N,\ell}}_{L^2}\norm{g_M}_{L^2}.
	\end{equation}
	The Fourier integral representation yields the pointwise identity $U(t)f_N=\sum_{\ell=1}^Ku_\ell$. Taking the supremum over time and then applying the triangle inequality in $L_x^p$, we obtain
	\begin{equation}
		\label{eq:shell-sum}
		\left\|\sup_{t\in\mathbb R}|\U(t)f_N\,v|\right\|_{L^p}\le\sum_{\ell=1}^K
		\left\|\sup_{t\in\mathbb R}|u_\ell v|\right\|_{L^p} \le C_{d,p}N^\alpha M^{\beta+1/p}
		\norm{g_M}_{L^2}\sum_{\ell=1}^K\norm{f_{N,\ell}}_{L^2}.
	\end{equation}
	By Cauchy-Schwarz inequality, the orthogonality identity \eqref{eq:shell-orthogonality},  we have
	$$ \sum_{\ell=1}^K\norm{f_{N,\ell}}_{L^2}\leq K^{1/2}\bigg(\sum_{l=1}^K\|f_{N,\ell}\|_{L^2}^2\bigg)^{1/2}
	= K^{1/2}\norm{f_N}_{L^2}
	\lesssim (N/M)^{1/2}\norm{f_N}_{L^2}.$$
	Substituting this bound into \eqref{eq:shell-sum} yields
	\begin{equation}\label{eq:before-normalization}
		\left\|\sup_{t\in\mathbb R}|\U(t)f_N\,\U(t)g_M|\right\|_{L^p}
		\le C_dN^{\alpha+1/2}M^{\beta+1/p-1/2}
		\norm{f_N}_{L^2}\norm{g_M}_{L^2}.
	\end{equation}
	Finally, the identities in \eqref{eq:gain-identities} give
	$$N^{\alpha+1/2}M^{\beta+1/p-1/2}=N^{s-\delta}M^{s+\delta}=\bigg(\frac{M}{N}\bigg)^\delta N^sM^s.$$
	This proves the desired estimate with $\delta_d=\delta=1/(d^2-1)$.
\end{proof}
\begin{remark}\label{rem:sharp-cut}
	The characteristic functions in \eqref{eq:shell-definition} are used only to decompose the data orthogonally in \(L^2\). Each \(f_{N,\ell}\) satisfies the annular Strichartz estimate of Lemma~\ref{lem:strichartz}, the proof of which relies on a single smooth cut-off equal to \(1\) on \(\A_N\). Consequently, no \(L^r\) multiplier bound for these characteristic functions is required.
	
	The partition is chosen separately for each pair \((N,M)\). The subsequent summation over frequencies relies only on the resulting estimate, whose constant is uniform in both scales. Finally, the decomposition restricts only the radial frequency variable and imposes no radial symmetry on the data.
\end{remark}

\subsection{The two-dimensional estimate}

\begin{proposition}\label{prop:bilinear-two}
	Proposition~\ref{prop:bilinear-intro} holds when \(d=2\),
	with \(p_2=3\), \(s_2=2/3\), and \(\delta_2=1/30\).
\end{proposition}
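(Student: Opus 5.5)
The plan is to keep the architecture of Proposition~\ref{prop:bilinear-high} and replace only the linear H\"older step. First I check why that step cannot be reused. In \(d=2\), Lemma~\ref{lem:strichartz} needs \(1/a+1/(2r)=1/4\) with \(2<r<\infty\), so the regularity is \(\gamma=\tfrac34-\tfrac{3}{2r}\in(0,\tfrac34)\). A H\"older split into \(L^3_{x,t}\) again forces \(\alpha+\beta=1\). The gain condition \(1-p/2<\alpha<1/(2p)\) with \(p=3\) becomes \(\alpha<1/6\). This requires \(1/r_\alpha>7/18\), and then \(1/r_\beta=1/3-1/r_\alpha<0\), which is impossible. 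So the \(L^3_{x,t}\) bound for a shell product must come from a genuinely bilinear estimate that exploits the frequency separation \(M\ll N\).

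Concretely, partition \([N/2,2N]\) into radial intervals \(I_\ell\) of length \(M\) exactly as in \eqref{eq:shell-definition}, and set \(u_\ell=\U(t)f_{N,\ell}\), \(v=\U(t)g_M\). The key intermediate bound I aim for is
\[
\|u_\ell v\|_{L^3_{x,t}}\lesssim N^{\alpha}M^{\beta}\,\|f_{N,\ell}\|_{L^2}\|g_M\|_{L^2},
\qquad \alpha+\beta=1,\quad \alpha<\tfrac16 .
\]
Once this holds, the rest of the proof of Proposition~\ref{prop:bilinear-high} goes through verbatim. Lemma~\ref{lem:temporal-support} puts the temporal support of \(u_\ell v\) in an interval of length \(O(M)\) (and also gives boundedness and continuity). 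Lemma~\ref{lem:time-bernstein} then costs \(M^{1/3}\), and Cauchy--Schwarz over the \(K\lesssim N/M\) shells costs \((N/M)^{1/2}\). The result is the coefficient \(N^{\alpha+1/2}M^{\beta-1/6}=N^{2/3}M^{2/3}(M/N)^{1/6-\alpha}\), which is \eqref{eq:bilinear-intro} with gain \(1/6-\alpha\).

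To get the intermediate bound I would interpolate, via H\"older on \(|u_\ell v|^3=|u_\ell v|^2\,|u_\ell v|\), between two estimates.

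\emph{The \(L^2\) estimate.} I would take the bilinear \(L^2_{x,t}\) estimate for half-waves at separated frequencies (Klainerman--Machedon / Foschi--Klainerman type), which in \(d=2\) reads
\[
\|u_\ell v\|_{L^2_{x,t}}\lesssim N^{-1/4}M^{3/4}\|f_{N,\ell}\|_{L^2}\|g_M\|_{L^2}.
\]
This is proved by Plancherel in \((x,t)\) and a bound on the measure of the cone intersection \(\{\xi+\eta=\zeta,\ |\xi|+|\eta|=\tau\}\), with \(|\xi|\sim N\) and \(|\eta|\sim M\).

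\emph{The \(L^\infty\) estimate.} This is the trivial bound from Cauchy--Schwarz on the Fourier side. The shell \(\{|\xi|\in I_\ell\}\) has measure \(\sim NM\) and \(\A_M\) has measure \(\sim M^2\), so
\[
\|u_\ell v\|_{L^\infty}\lesssim (NM)^{1/2}M\,\|f_{N,\ell}\|_{L^2}\|g_M\|_{L^2}.
\]

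Interpolating with weight \(2/3\) on the \(L^2\) bound gives \(N^{0}M^{1}\), that is, \(\alpha=0\) and \(\beta=1\). This yields a gain of \((M/N)^{1/6}\), better than the claimed \(1/30\), so there is room to spare if I have misremembered the exact powers in the bilinear estimate. Any \(L^2\) bound of the form \(N^{-\epsilon}M^{1/2+\epsilon}\) (after normalization) with \(\epsilon>-1/4\) still leaves a positive exponent. For finiteness of \(\|u_\ell v\|_{L^3_{x,t}}\), needed in Lemma~\ref{lem:time-bernstein}, the same interpolation also applies.

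The main obstacle is establishing the bilinear \(L^2\) estimate rigorously, with constants uniform in \(\ell\) and with the sharp radial cut-offs. After Plancherel in \((x,t)\), one must bound
\[
\sup_{\zeta,\tau}\int \mathbf 1_{\A_M}(\eta)\,\delta\bigl(\tau-|\zeta-\eta|-|\eta|\bigr)\,d\eta .
\]
This is the measure of an ellipse arc intersected with the small annulus, and it degenerates near the cone, where \(\tau\approx|\zeta|\). My first attempt would be the direct computation in elliptic coordinates, which gives \(\lesssim M^{3/2}N^{-1/2}\) uniformly. If the near-parallel interactions spoil uniformity, I would additionally decompose \(\A_M\) and the shell into angular sectors of aperture \((M/N)^{c}\). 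I would use the \(L^\infty\) bound on the nearly parallel sectors and the transversal bilinear bound on the rest, optimizing \(c\). This loss is plausibly what produces the weaker exponent \(\delta_2=1/30\).
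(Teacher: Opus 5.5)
\textbf{There is a genuine gap: the key shell estimate is false, and the route you propose to it cannot produce any gain.}

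\emph{The bilinear \(L^2\) bound fails.} Your bound \(\|u_\ell v\|_{L^2_{x,t}}\lesssim N^{-1/4}M^{3/4}\|f_{N,\ell}\|_{L^2}\|g_M\|_{L^2}\) is false in \(d=2\). Take \(\widehat f_{N,\ell}\) and \(\widehat g_M\) to be indicators of boxes of side \(cM\), one centered on the \(e_1\)-axis inside a single shell and the other centered at \(Me_2\). Then \(\|f_{N,\ell}\|_{L^2}\|g_M\|_{L^2}\sim M^2\), while \(|u_\ell v|\gtrsim M^4\) on \(\{|x|,|t|\le c/M\}\). Hence \(\|u_\ell v\|_{L^2_{x,t}}\gtrsim M^{1/2}\|f_{N,\ell}\|_{L^2}\|g_M\|_{L^2}\), which exceeds \(N^{-1/4}M^{3/4}\) when \(N\gg M\). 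It is worse than that. Aligned sectors of aperture \(\theta\le (M/N)^{1/2}\) give a ratio \(\gtrsim M(N\theta)^{-1/2}\), so no uniform \(L^2\) bound exists at all. Your elliptic-coordinate computation reflects this: the arc measure behaves like \(M^{3/2}(\tau-|\zeta|)^{-1/2}\). That quantity is \(\gtrsim M\) and unbounded as \(\tau\downarrow|\zeta|\), never \(\lesssim M^{3/2}N^{-1/2}\). This is the familiar failure of the same-sign bilinear \(L^2\) estimate in two dimensions.

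\emph{The interpolation cannot give a positive gain.} Interpolating an \(L^2\) bound \(N^{-\epsilon}M^{1/2+\epsilon}\) with \(\|u_\ell v\|_{L^\infty}\lesssim N^{1/2}M^{3/2}\) gives \(\alpha=\tfrac16-\tfrac{2\epsilon}{3}\), so the gain is \(2\epsilon/3\). This needs \(\epsilon>0\), not \(\epsilon>-1/4\) as you claim. The box example forces \(\epsilon\le 0\), so \(L^2\)--\(L^\infty\) interpolation gives at best zero gain. Angular sectorization does not change this. For sectors of aperture \(\theta\) at mutual angle \(\sim\theta\), the sharp bounds \((M/\theta)^{1/2}\) and \(N^{1/2}M^{3/2}\theta\) interpolate to \(N^{1/6}M^{5/6}\) for every \(\theta\), before any losses from summing sectors. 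An \(L^\infty\) bound alone gives no \(L^3\) control of the parallel pieces. The exponent \(1/30\) therefore does not come from the mechanism you suggest.

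\emph{The obstacle is self-imposed.} You showed correctly that radial slicing combined with linear H\"older cannot work when \(d=2\). But your conclusion that a genuinely bilinear estimate is needed only holds if you insist on slicing. For \(p=3\), the unsliced H\"older--Bernstein argument already gains, and this is the paper's proof.
\begin{itemize}
\item H\"older with the admissible pairs \((10,10/3)\) and \((30/7,30)\), of regularities \(3/10\) and \(7/10\), gives \(\|\U(t)f_N\,\U(t)g_M\|_{L^3_{x,t}}\lesssim N^{3/10}M^{7/10}\|f_N\|_{L^2}\|g_M\|_{L^2}\).
\item The product has temporal Fourier support in \([N/2+M/2,\,2N+2M]\subset[0,4N]\). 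So Lemma~\ref{lem:time-bernstein} with \(A=2N\), \(L=2N\) costs only \(N^{1/3}\).
\item Since \(3/10+1/3=2/3-1/30\) and \(7/10=2/3+1/30\), this gives exactly \(\delta_2=1/30\).
\end{itemize}
Any \(\alpha\in(1/4,1/3)\) would also work. As the appendix notes, slicing in \(d=2\) would multiply the bound by \((N/M)^{1/6}\), so the direct argument is the right one here.
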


\begin{proof}
	In order to apply Lemma \ref{lem:strichartz}, we choose the two finite, non-endpoint pairs
	\[
	(a_H,r_H)=(10,10/3),\qquad
	(a_L,r_L)=(30/7,30).
	\]
	They satisfy \(1/a_H+1/(2r_H)=1/a_L+1/(2r_L)=1/4\).
	The corresponding regularities are
	\[
	\gamma_{H}=1-\frac{ 2}{r_{H}}-\frac{1}{a_{H}} = \frac3{10},
	\]
	and
	\[
	\gamma_{L}=1-\frac{ 2}{r_{L}}-\frac{1}{a_{L}} =\frac7{10}.
	\]
	Moreover,
	\[\frac{1}{a_{H}} + \frac{1}{a_{L}} = \frac{1}{r_{H}} + \frac{1}{r_{L}} =\frac{1}{3}= \frac{1}{p_{2}}. \]
	H\"older's inequality   and Lemma~\ref{lem:strichartz}  therefore give
	\begin{equation}\label{Strichartzd=2}
		\norm{\U(t)f_N\,\U(t)g_M}_{L^3_{x,t}}
		\le  \|U(t)f_{N}\|_{L^{a_{H}}_{t}L^{r_{H}}_{x}} \|U(t)g_{M}\|_{L^{a_{L}}_{t}L^{r_{L}}_{x}} \le C N^{3/10}M^{7/10}\norm{f_N}_{L^2}\norm{g_M}_{L^2}.
	\end{equation}
	Lemma~\ref{lem:temporal-support} places the temporal Fourier support with respect to $t$
	of the product $U(t)f_{N}U(t)g_{M}$ in an interval contained in \([0,4N]\).
	Indeed, the support is contained in
	\([N/2+M/2,2N+2M]\), and \(M\le N\).
	After modulation by \(e^{-2iNt}\), this interval lies inside
	\([-2N,2N]\), so Lemma~\ref{lem:time-bernstein} applies with
	\(L=2N\). Lemma~\ref{lem:time-bernstein} and (\ref{Strichartzd=2}) yield
	\[
	\left\|\sup_{t\in\mathbb R}|\U(t)f_N\,\U(t)g_M|\right\|_{L_{x}^{3}}
	\le C N^{3/10+1/3}M^{7/10}\norm{f_N}_{L^2}\norm{g_M}_{L^2}.
	\]
	Finally,
	\[
	\frac3{10}+\frac13-s_{2} = \frac3{10}+\frac13-\frac23=-\frac1{30}=-\delta_{2},
	\]
	and
	\[
	\frac7{10} -s_{2}=\frac7{10}-\frac23=\frac1{30} = \delta_{2}.
	\]
	This proves   Proposition~\ref{prop:bilinear-intro}.
\end{proof}

Propositions~\ref{prop:bilinear-high} and~\ref{prop:bilinear-two}
prove Proposition~\ref{prop:bilinear-intro}. In Appendix  \ref{app:parameters},  we will further  explain how theese exponents
$a_{H},r_{H},a_{L},r_{L}$, and $\delta_{d}$ were chosen and how radial subdivision yields a unified argument for all $d\ge3$.

\section{Frequency summation and the maximal estimate}
\label{sec:main-proof}

\subsection{A finite-frequency square expansion}

\begin{lemma}\label{lem:finite-sum}
	Let \(S\subset\mathbb Z\) be finite, $q_d$ and $s_d$ be defined by (\ref{eq:critical-pair}). Suppose
	\(h_j\in L^2(\R^d)\) with
	\(\supp\widehat h_j\subset\A_{2^j}\).
	Then
	\begin{equation}\label{eq:finite-sum}
		\left\|\sup_{t\in\mathbb R}\left|\sum_{j\in S}\U(t)h_j\right|\right\|_{L^{q_d}}^2
		\le C_d\sum_{j\in S}2^{2s_dj}\norm{h_j}_{L^2}^2.
	\end{equation}
\end{lemma}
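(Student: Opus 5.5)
The plan is to square the maximal function and reduce everything to Proposition~\ref{prop:bilinear-intro}. Write \(q=q_d\), \(p=p_d=q/2\), \(s=s_d\), and \(u_j(x,t)=\U(t)h_j(x)\). Each \(u_j\) is bounded and jointly continuous by Lemma~\ref{lem:temporal-support}, so the supremum below is a genuine pointwise supremum and is measurable.

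\textbf{Step 1: pointwise bound.} For every \(x\) and \(t\),
\[
\Bigl|\sum_{j\in S}u_j(x,t)\Bigr|^2\le\sum_{j,k\in S}|u_j(x,t)\,u_k(x,t)|.
\]
Taking the supremum in \(t\) termwise gives
\[
\sup_t\Bigl|\sum_j u_j\Bigr|^2\le \sum_{j,k}\sup_t|u_ju_k|.
\]

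\textbf{Step 2: pass to \(L^p\).} Since
\[
\bigl\|\sup_t|\sum_j u_j|\bigr\|_{L^q}^2=\bigl\|\sup_t|\sum_j u_j|^2\bigr\|_{L^p}
\]
and \(p\ge1\), Minkowski's inequality in \(L^p\) (the sum over \(S\) is finite) bounds the left side of \eqref{eq:finite-sum} by
\[
\sum_{j,k\in S}\bigl\|\sup_t|u_ju_k|\bigr\|_{L^p}.
\]

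\textbf{Step 3: apply the bilinear estimate.} For each pair with \(k\le j\), apply Proposition~\ref{prop:bilinear-intro} with \(N=2^j\), \(M=2^k\), \(f_N=h_j\), \(g_M=h_k\). The case \(j\le k\) follows by symmetry, and the diagonal \(j=k\) is allowed since \(N\ge M\). Setting \(b_j=2^{sj}\norm{h_j}_{L^2}\), this gives
\[
\bigl\|\sup_t|u_ju_k|\bigr\|_{L^p}\le C_d\,2^{-\delta_d|j-k|}\,b_jb_k.
\]

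\textbf{Step 4: Schur's test.} The kernel \(2^{-\delta_d|j-k|}\) has row sums bounded by \(\sum_{m\in\mathbb Z}2^{-\delta_d|m|}\lesssim_d 1\). Schur's test, or simply \(b_jb_k\le\tfrac12(b_j^2+b_k^2)\) followed by summing the kernel, then yields
\[
\sum_{j,k\in S}2^{-\delta_d|j-k|}\,b_jb_k\lesssim\sum_{j\in S}b_j^2,
\]
which is exactly \eqref{eq:finite-sum}.

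The main obstacle is not any estimate, since the summable off-diagonal decay is already supplied by Proposition~\ref{prop:bilinear-intro}. The care needed is in the bookkeeping. First, the supremum of a square must be dominated by the sum of suprema of products before the \(L^p\) norm is taken; the order of supremum and summation runs in the favourable direction. Second, it must be checked that \(p=(d+1)/(d-1)\ge1\), so the triangle inequality in \(L^p\) is valid.
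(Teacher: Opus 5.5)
Your proposal is correct and follows the paper's proof essentially step for step: expand the square, dominate the supremum by the finite double sum of $\sup_t|u_ju_k|$ in $L^{p_d}$, apply Proposition~\ref{prop:bilinear-intro} with the larger frequency first, and sum the kernel $2^{-\delta_d|j-k|}$. The only cosmetic difference is at the last step: you use $b_jb_k\le\frac12(b_j^2+b_k^2)$ (Schur's test), whereas the paper applies Cauchy--Schwarz along each diagonal shift $m=j-k$; both give the same constant $\sum_{m}2^{-\delta_d|m|}$.
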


\begin{proof}
	Let \(u_j=\U(t)h_j\), \(a_j=2^{s_dj}\norm{h_j}_{L^2}\),
	and extend \(a_j\) by zero outside \(S\).
	For a complex number \(z\), \(|z^2|=|z|^2\).
	Since   \(q_d=2p_d\), which can be find in (\ref{eq:delta}), it follows that
	\begin{align}
		\left\|\sup_{t\in\mathbb R}\left|\sum_j u_j\right|\right\|_{L^{q_d}}^2
		&=\left\|\sup_{t\in\mathbb R}\left|\left(\sum_j u_j\right)^2\right|
		\right\|_{L^{p_d}} \notag\\
		&\le \sum_{j,k\in S}\left\|\sup_t|u_ju_k|\right\|_{L^{p_d}}.
		\label{eq:square}
	\end{align}
	We expanded the complex square instead of introducing a conjugate factor, so all product estimates are for the same half-wave sign.
	
	Proposition~\ref{prop:bilinear-intro}, with the larger of the two
	frequencies in the first position, gives
	\[
	\left\|\sup_{t\in\mathbb R}|u_ju_k|\right\|_{L^{p_d}}
	\le C_d2^{-\delta_d|j-k|}a_ja_k.
	\]
	Summing and applying Cauchy--Schwarz inequality to each shift, we obtain
	\begin{align}
		\sum_{j,k}2^{-\delta_d|j-k|}a_ja_k
		&=\sum_{m\in\mathbb Z}2^{-\delta_d|m|}
		\sum_{j\in\mathbb Z}a_ja_{j-m}\notag\\
		&\le
		\left(\sum_{m\in\mathbb Z}2^{-\delta_d|m|}\right)
		\sum_j a_j^2 \nonumber\\
		&= \frac{1+2^{-\delta_d}}{1-2^{-\delta_d}}  \sum_{j\in S}2^{2s_dj}\norm{h_j}_{L^2}^2.
		\label{eq:discrete}
	\end{align}
	The constant
	\(\frac{1+2^{-\delta_d}}{1-2^{-\delta_d}}\)  is finite for
	every fixed \(d\).
	Equations \eqref{eq:discrete} and \eqref{eq:square} imply \eqref{eq:finite-sum}.
\end{proof}

\subsection{Passage to Schwartz and Sobolev data}

Choose a real-valued, non-negative, radial function
\(\varphi\in C_c^\infty(\R^d\setminus\{0\})\), with support contained in
\(\{\xi \in \mathbb{R}^{d}: 1/2\le|\xi|\le2\}\), such that
\[
\sum_{j\in\mathbb Z}\varphi(2^{-j}\xi)=1
\quad(\xi\ne0).
\]

For each Schwartz function $f$ defined on $\mathbb{R}^{d}$, we denote
\[ \widehat{P_{j}f}(\xi): =\varphi(2^{-j}\xi) \hat{f}(\xi). \]
Plancherel's theorem, the fact that  \(|\xi| \sim 2^j\) on each support,
and the partition of unity yield
\begin{equation}\label{eq:LP}
	\sum_j2^{2s_dj}\norm{P_jf}_{L^2}^2
	\sim \norm{f}_{\dot H^{s_d}}^2.
\end{equation}
%For completeness, the upper bound follows by finite overlap.
%For the lower bound, if at most \(B\) terms are nonzero at a fixed
%\(\xi\), then
%\[
% 1=\left|\sum_j\varphi(2^{-j}\xi)\right|^2
%       \le B\sum_j|\varphi(2^{-j}\xi)|^2.
%\]
%Integrating with the weight \(|\xi|^{2s_d}|\widehat f(\xi)|^2\)
%proves the reverse inequality.

Let
\[
f^{(J)}=\sum_{|j|\le J}P_jf.
\]
By Lemma~\ref{lem:finite-sum} and \eqref{eq:LP},
\begin{equation}\label{eq:uniform-truncation}
	\norm{\M_{\R}f^{(J)}}_{L^{q_d}}
	\le C_d\norm{f}_{\dot H^{s_d}},
\end{equation}
uniformly in \(J\).
For \(f\in\mathcal S(\R^d)\), the finite overlap property of the supports of functions $\widehat{P_{j}f}$  also implies
\[
\sum_j\norm{\widehat{P_jf}}_{L^1}\le C\norm{\widehat f}_{L^1}<\infty.
\]
Thus
\begin{equation}\label{eq:uniform-xt}
	\sup_{x,t}\left|\U(t)(f-f^{(J)})(x)\right|
	\le (2\pi)^{-d}\norm{\widehat f-\widehat{f^{(J)}}}_{L^1}
	\longrightarrow0.
\end{equation}
In particular, the corresponding maximal functions converge
pointwisely. Applying Fatou's Lemma to \eqref{eq:uniform-truncation} gives
\[  \norm{\M_{\R}f}_{L^{q_d}} =  \norm{ \lim_{J \rightarrow +\infty}  \M_{\R}f^{(J)}}_{L^{q_d}} \le \liminf_{J \rightarrow +\infty} \norm{\M_{\R}f^{(J)}}_{L^{q_d}}
\le C_d\norm{f}_{\dot H^{s_d}}. \]
This proves Theorem~\ref{thm:main} for Schwartz function.
Notice that the Schwartz class \(\mathcal{S}(\mathbb{R}^d)\) is dense in
\(\dot H^{s_d}(\mathbb{R}^d)\) with respect to the
\(\dot H^{s_d}\)-norm. The extension to
\(\dot H^{s_d}(\mathbb{R}^d)\) follows from density and the bounded
linear extension theorem \cite[Theorem~I.7]{RS}: apply the theorem to
the evolution operator with values in
\(L^{q_d}(\mathbb{R}^d;C([-T,T]))\), and then let \(T\to\infty\).
See also \cite[the proof of Theorem~5]{BG} for a similar
maximal-norm approximation argument. We omit the details of this
standard argument.

\appendix

\section{Derivation of the parameters and the dimension split}
\label{app:parameters}

We explain how the exponents in the proof were selected, and
why the radial subdivision removes a restriction on dimension.
We fix \(d\ge2\), and write \(p=(d+1)/(d-1)\), \(s=d/(d+1)\).
On the sharp wave-admissibility line in Lemma~\ref{lem:strichartz}, the exponents $a$ and $r$ satisfy
\[
\frac1a+\frac{d-1}{2r}=\frac{d-1}{4},
\]
and the Strichartz regularity  satisfies
\[
\gamma=\frac d2-\frac dr-\frac1a
=\frac{d+1}{d-1}\frac1a=\frac pa.
\]
Solving for both exponents gives
\begin{equation}\label{eq:derived-pairs}
	\frac1a=\frac\gamma p,\qquad
	\frac1r=\frac12-\frac{2\gamma}{d+1}.
\end{equation}
Thus a finite, non-endpoint pair with \(a,r>2\) is obtained
precisely when
\begin{equation}\label{eq:gamma-range}
	0<\gamma<\min\left\{\frac p2,\frac{d+1}{4}\right\}.
\end{equation}
For two regularities \(\alpha,\beta\) satisfying
\(\alpha+\beta=1\), \eqref{eq:derived-pairs} gives
\[
\frac1{a_\alpha}+\frac1{a_\beta}=\frac1p,
\qquad
\frac1{r_\alpha}+\frac1{r_\beta}
=1-\frac2{d+1}=\frac1p.
\]
These are exactly the two product H\"older identities.

\subsection{Why the unsliced argument is restricted to low dimensions}

Applying temporal Bernstein directly at scale \(N\) gives
\[
\left\|\sup_t|\U(t)f_N\U(t)g_M|\right\|_{L^p}
\le C N^{\alpha+1/p}M^\beta\norm{f_N}_{L^2}\norm{g_M}_{L^2}.
\]
Its normalized frequency gain is
\[
\delta_{\mathrm{unsliced}}
=s-\frac1p-\alpha=\frac1{d+1}-\alpha.
\]
Thus positive gain requires \(\alpha<1/(d+1)\), whereas
\(a_\beta =\frac{p}{1-\alpha}>2\) requires
\[
\alpha>1-\frac p2=\frac{d-3}{2(d-1)}.
\]
For \(d\ge3\), these strict inequalities can hold simultaneously
exactly when
\[
\frac{d-3}{2(d-1)}<\frac1{d+1}
\quad\Longleftrightarrow\quad d^2-4d-1<0.
\]
For integer \(d\ge3\), this means \(d=3,4\).
The spatial-exponent restrictions in \eqref{eq:gamma-range}
can also be satisfied in these dimensions.
For example, \(\alpha=1/8\) works in dimension three,
and \(\alpha=11/60\) works in dimension four.
The corresponding unsliced gains are \(1/8\) and \(1/60\).
For \(d\ge5\), this particular unsliced non-endpoint
H\"older--Bernstein argument has no admissible choice with
positive gain.

\subsection{The radial subdivision in every dimension at least three}

For \(d\ge3\), one has \(1<p\le2\).
Seek \(0<\alpha<\beta=1-\alpha<1\).
The stricter of the two time conditions is
\(\beta<p/2\), or \(\alpha>1-p/2\).
After the radial subdivision, the frequency coefficient is
\[
N^{\alpha+1/2}M^{\beta+1/p-1/2}
=N^{s-\delta}M^{s+\delta},\qquad
\delta=\frac1{2p}-\alpha,
\]
because \(2s=1+1/p\).
Positive gain and a nonendpoint lower time exponent therefore
require exactly
\begin{equation}\label{eq:selection-interval}
	1-\frac p2<\alpha<\frac1{2p}.
\end{equation}
This interval has length
\[
\frac1{2p}-1+\frac p2
=\frac{(p-1)^2}{2p}
=\frac2{d^2-1}>0.
\]
The midpoint of the interval $(1-\frac{p}{2}, \frac{1}{2p})$ is
\[
\alpha=\frac12\left(1-\frac p2+\frac1{2p}\right)
=\frac12-\frac d{d^2-1}.
\]
This gives exactly
\[
\beta=\frac12+\frac d{d^2-1},\qquad
\delta=\frac1{d^2-1}.
\]
Both spatial exponents are finite: \(0<\alpha<\beta<1\) and
\(1\le(d+1)/4\) imply the spatial part of
\eqref{eq:gamma-range}. The high time condition $\alpha <\frac{2}{p}$ follows as well,
since \(\alpha<1/(2p)<p/2\).
Thus the allowed interval remains nonempty in every fixed
dimension, even though its length tends to zero.

For a direct comparison, the ratio of the subdivided coefficient
to the unsubdivided coefficient is
\[
\frac{N^{\alpha+1/2}M^{\beta+1/p-1/2}}
{N^{\alpha+1/p}M^\beta}
=\left(\frac MN\right)^{1/p-1/2}.
\]
It improves the bound when \(d>3\), is neutral when \(d=3\),
and would worsen it when \(d=2\).

\subsection{The two-dimensional interior choice}

For \(d=2\), one has \(p=3\) and \(s=2/3\).
The spatial conditions in \eqref{eq:gamma-range} require
\(\alpha,\beta<3/4\), so \(\beta=1-\alpha\) forces
\(\alpha>1/4\). Positive unsliced gain requires
\(\alpha<1/3\). Hence the interval
\[
\frac14<\alpha<\frac13
\]
supplies strictly admissible choices. Taking \(\alpha=3/10\)
gives \(\beta=7/10\), and \eqref{eq:derived-pairs} gives
\[
a_\alpha=\frac3{3/10}=10,\qquad
\frac1{r_\alpha}=\frac12-\frac{2(3/10)}3=\frac3{10},
\]
\[
a_\beta=\frac3{7/10}=\frac{30}7,\qquad
\frac1{r_\beta}=\frac12-\frac{2(7/10)}3=\frac1{30}.
\]
Finally, we have \(\delta=1/3-3/10=1/30\).
The alternative radial condition
\(\alpha<1/(2p)=1/6\) is incompatible with the condition
\(\alpha>1/4\), so the direct argument is the appropriate
choice in this dimension.

\begin{flushleft}
	\vspace{0.3cm}\textsc{Bowen An\\
	School of Mathematics and Statistics, Northwestern Polytechnical University, 710129, Xi'an, People's Republic of China\\
		Email address:}  anbowen@mail.nwpu.edu.cn

	\vspace{0.3cm}\textsc{Wenjuan Li\\
		School of Mathematics and Statistics, Northwestern Polytechnical University, 710129, Xi'an, People's Republic of China\\
		Email address:} liwj@nwpu.edu.cn

	\vspace{0.3cm}\textsc{Huiju Wang (Corresponding Author)\\
		School of Mathematics and Statistics, Henan University, 475000, Kaifeng, People's Republic of China\\
		Email address:} huijuwang@mail.nwpu.edu.cn
\end{flushleft}

\end{document}